\newtheorem{defi}{Definition}
\newtheorem{theo}{Theorem}
\newtheorem{coro}{Corollary}
\newtheorem{prop}{Proposition}
\newtheorem{la}{Lemma}
\newcommand{\old}[1]{{}}
\author{\'Eva Czabarka\affiliationmark{1,2}
\and {Peter Dankelmann\affiliationmark{2}\thanks{National Research Foundation of South Africa, grant number 118521}}
\and {Trevor Olsen\affiliationmark{1}\thanks{NSF DMS, grant number  1600811}}
\and {L\'aszl\'o A. Sz\'ekely\affiliationmark{1,2}$^\dagger$}
}
\title{Wiener Index and Remoteness in Triangulations and Quadrangulations} 
\affiliation{
University of South Carolina, USA \\
University of Johannesburg, South Africa
}
\keywords{distance, Wiener index, average distance, planar graph, triangulation, quadrangulation, connectivity, remoteness}
\begin{document}

\publicationdetails{23}{2021}{1}{3}{6473}

\maketitle

\begin{abstract}
Let $G$ be a connected graph. 
The Wiener index of a connected graph is the sum of the distances between
all unordered pairs of vertices. We provide asymptotic formulas for the maximum
Wiener index of simple triangulations and quadrangulations with given connectivity,
as the order increases, and make conjectures for the extremal triangulations
and quadrangulations based on computational evidence.
If $\overline{\sigma}(v)$ denotes the arithmetic mean of the distances from $v$ to all other vertices of
$G$, then the %
remoteness of $G$ is defined as the 
 largest value of $\overline{\sigma}(v)$ over all vertices $v$ of $G$. 
We give 
sharp upper
bounds on the remoteness of simple triangulations and quadrangulations of given order
and connectivity. 
\end{abstract}

\section{Definitions and Selected Results on the Wiener Index}

Let $G$ be a connected graph. The Wiener index $W(G)$ of $G$ is the 
sum of the distances between all unordered pairs of distinct vertices, i.\ e.,
\[ W(G) = \sum_{\{u,v\} \subseteq V(G)} d_G(u,v), \]
where $d_G(u,v)$ is the usual distance between vertices $u$ and $v$, i.e., the 
minimum number of edges on a $(u,v)$-path in $G$.  
The Wiener index was first studied by the chemist Wiener \cite{Wie1947}, who
observed that it relates well to the boiling point of certain alkanes. 
Several other applications in chemistry were found subsequently, see for
example \cite{Rou2002}.

The systematic study of the mathematical properties of the Wiener index began with
the classical papers by Doyle and Graver \cite{DoyGra1977}, Entringer, Jackson and 
Snyder \cite{EntJacSny1976} and Plesn\'\i k \cite{Ple1984}. 
Several bounds on the Wiener index and closely related parameters, such as transmission 
or routing cost (defined as the sum of the distances between all ordered pairs of vertices),  
average distance or  mean distance  (both are defined as the arithmetic mean of the distances 
between all unordered pairs of distinct vertices) have been proved since.

The most basic upper bound on $W(G)$ states that if $G$ is a connected graph of 
order $n$, then
\begin{equation} \label{maxwiener}
 W(G) \leq \frac{(n-1)n(n+1)}{6}, 
 \end{equation}
which is attained only by paths (see \cite{DoyGra1977} \cite{Ple1984}, \cite{Lov1979}). 
Many sharp or asymptotically sharp bounds on $W(G)$ in terms of other graph parameters are known,
for example minimum degree (\cite{BeeRieSmi2001}, \cite{DanEnt2000}, \cite{KouWin1997}), 
connectivity (\cite{DanMukSwa2009}, \cite{FavKouMah1989}), edge-connectivity 
(\cite{DanMukSwa2008}, \cite{DanMukSwa2008-2}) and maximum degree \cite{FisHofRauSzeVol2002}. 
For recent results on the Wiener index see, for example, \cite{DasNad2017},
\cite{GutCruRad2014}, \cite{KlaNad2014}, \cite{KnoLuzSkrGut2014}, \cite{KnoSkrTep2016} 
\cite{LiMaoGut2018}, \cite{MukVet2014}, \cite{WagWan2009}, \cite{Wag2006} and 
\cite{WanYu2006}.

Entringer et al. \cite{EntJacSny1976}  found that among trees on the same number of vertices, the star minimizes and the
path maximizes the Wiener index (see also \cite{Lov1979} problem 6.23). Fischermann, Hoffmann, Rautenbach, Sz\'ekely and Volkmann
 \cite{FisHofRauSzeVol2002}  (see also \cite{sorder}) characterized
binary trees with minimum and maximum Wiener index. 

The notation we use in this paper is as follows. If $G$ is a graph, then we denote
its vertex set and edge set by $V(G)$ and $E(G)$. By $n(G)$ and $m(G)$ we mean
the  \emph{order} and \emph{size} of $G$, defined as $|V(G)|$ and $|E(G)|$, respectively.
The \emph{eccentricity} $e(v)$ of a vertex $v$ is the distance to a vertex farthest 
from $v$, i.e., $e(v) = \max_{u\in V(G)} d_G(v,u)$. The largest and the smallest 
of the eccentricities of the vertices of $G$ are the \emph{diameter} and the \emph{radius}
of $G$, respectively.  
The \emph{neighborhood} of a vertex
$v$ of $G$ is the set of vertices adjacent to $v$, it is denoted by $N_G(v)$, and
the cardinality $|N_G(v)|$ is the \emph{degree} of $v$, which we denote by ${\rm deg}_G(v)$.   
If $i$ is an integer with $0 \leq i \leq e(v)$, then $N_i(v)$ denotes the set of all
vertices at distance exactly $i$ from $v$, and we write $n_i(v)$ for $|N_i(v)|$. 
If there is no danger of confusion, we often omit the subscript $G$ or the argument
$G$ or $v$.  If $A,B \subseteq V(G)$, then $m(A,B)$ denotes the number of 
edges that join a vertex in $A$ to a vertex in $B$, and $G[A]$ denotes 
the subgraph of $G$ induced by $A$. 
If $w$ is a vertex of $G$ and $A\subseteq V(G)$, then a $(w,A)$-\emph{fan}
is a set of $|A|$ paths from $w$ to $A$, where any two paths have only $w$ in common.
If $G$ is connected and not complete, then
the \emph{connectivity} of $G$, $\kappa(G)$, is the smallest number of vertices whose deletion renders $G$ disconnected. 
The (not necessarily simple) plane graph $G$ is a \emph{triangulation} (resp. \emph{quadrangulation}) if every
face is a triangle (resp. $4$-cycle). A \emph{simple triangulation} (resp. \emph{simple quadrangulation}) is a triangulation (resp. quadrangulation) whose
underlying graph is simple. i.e. has no multiple edges. The graph $H$ is a \emph{minor} of the graph $G$, if $H$ can be obtained from a subgraph of $G$ by
edge-contractions.

By $C_n$, $K_n$ and $\overline{K_n}$ we mean the cycle, the complete graph, 
and the edgeless graph on $n$ vertices, respectively. If $G$ and $H$ are graphs
then $G+H$ denotes the graph obtained from the union of $G$ and $H$ by adding edges 
joining every vertex of $G$ to every vertex of $H$.

\section{Summary of the Results in this Paper}
Another natural class of study for extremal Wiener index
is planar graphs. However, as the maximum Wiener index of graphs (\ref{maxwiener}) is attained by a path, it makes sense to consider
more restricted classes of planar graphs, like simple triangulations and quadrangulations.  Che and Collins \cite{CheCol2019}, and the authors of the present paper \cite{talk}
investigated independently the maximum Wiener index of triangulations and  presented the same   simple triangulation 
of order $n$   (see Figures~\ref{fig:t0},~\ref{fig:t1},~\ref{fig:t2}) with Wiener index
\begin{equation} \label{withcases}
W(T_n) =\frac{1}{3}{n+2\choose 3}  - \frac{1}{3}  \Bigl\lfloor \frac{n+2}{3}\Bigl\rfloor=
\begin{cases}
\frac{n^3}{18} + \frac{n^2}{6} & \text{if }n=3k \\
\frac{n^3}{18} + \frac{n^2}{6} - \frac{2}{9} & \text{if } n=3k+1 \\
\frac{n^3}{18} + \frac{n^2}{6} - \frac{1}{9} & \text{if } n=3k+2 ,\\
\end{cases}
\end{equation}
which they  conjectured  to be optimal. (Note that this sequence is present in the On-Line Encyclopedia of Integer Sequences 
\cite{sloane} under A014125, which is the bisection of A001400.  The displayed closed form  is due to Bruno Berseli  \cite{sloane}.)
 We \cite{talk} announced that this conjecture is asymptotically true before the paper \cite{CheCol2019} was submitted. 
Che and Collins \cite{CheCol2019} verified this conjecture for simple triangulations  of order 
not exceeding $10$.  Using computer, we verified this conjecture for  simple triangulations  of order 
not exceeding $18$, see Table~1 in \cite{ourarxive}, an earlier version of this paper. Very recently, Debarun Ghosh, Ervin Gy\H ori, Addisu Paulos, Nika Salia, Oscar Zamora   
verified this conjecture   \cite{ervinarxive}.

In this paper we prove a {\it generalization} of this conjecture {\it asymptotically}. 
Note that every simple triangulation is 3-connected,   but  a simple triangulation cannot be 6-connected because of the number of edges.  Our main theorem (Theorem~\ref{theo:kpa-conn-tri})   
proves that for any $3\leq \kappa \leq 5$, the Wiener index of any $\kappa$-connected simple triangulation of
order $n$ is at most $\frac{1}{6\kappa}n^3 + O(n^{5/2})$. We also prove in Theorem~\ref{theo:kappa-conn-quad} that  for any $2\leq \kappa \leq 3$, the Wiener index of any $\kappa$-connected simple quadrangulation of
order $n$ is at most $\frac{1}{6\kappa}n^3 + O(n^{5/2})$.   %

We provide constructions matching the upper bounds of Theorems~\ref{theo:kpa-conn-tri}  and \ref{theo:kappa-conn-quad}  for the maximum Wiener index
of triangulations and quadrangulations of given connectivity. We do more, as we exhibit triangulations and quadrangulations 
following patterns by the residue of the order $n$ modulo $\kappa$,    which we conjecture as realizers of the maximum Wiener index.
Our conjectures are based on extensive computations. We detail next these conjectures.

  We constructed 4-connected simple triangulations with Wiener index 
\begin{equation} \label{withcases4}
W(T_n^4) =
\begin{cases}
\frac{n^3}{24} + \frac{n^2}{4} + \frac{n}{3} - 2 & \text{if } n=4k+2 \\
\frac{n^3}{24} + \frac{n^2}{4} + \frac{5n}{24} - 1 & \text{if } n=4k+3 \\
\frac{n^3}{24} + \frac{n^2}{4} + \frac{n}{3} - 2 & \text{if } n=4k \\
\frac{n^3}{24} + \frac{n^2}{4} + \frac{5n}{24} - \frac{3}{2} & \text{if } n=4k+1, \\
\end{cases}
\end{equation}
see Figures~\ref{fig:t4c2}, \ref{fig:t4c3}, \ref{fig:t4c0}, \ref{fig:t4c1}. This proves that Theorem~\ref{theo:kpa-conn-tri} is also asymptotically tight
for $\kappa=4$. Furthermore, we conjecture that the repetition of the  obvious  pattern in these figures provide the extremal triangulations.
Using computer, we verified this conjecture for  simple triangulations  of order 
not exceeding $22$, see Table~\ref{tab:t4csummary}.

We constructed 5-connected simple triangulations with Wiener index 
\begin{equation}
W(T_n^5) =
\begin{cases} \label{withcases5}
\frac{n^3}{30} + \frac{3n^2}{10} - \frac{23n}{15} + \frac{168}{5} & \text{if } n=5k+2 \\
\frac{n^3}{30} + \frac{3n^2}{10} - \frac{23n}{15} + 31 & \text{if } n=5k+3 \\
\frac{n^3}{30} + \frac{3n^2}{10} - \frac{23n}{15} + \frac{161}{5} & \text{if } n=5k+4 \\
\frac{n^3}{30} + \frac{3n^2}{10} - \frac{23n}{15} + 32 & \text{if } n=5k \\
\frac{n^3}{30} + \frac{3n^2}{10} - \frac{23n}{15} + \frac{156}{5} & \text{if } n=5k+1, \\
\end{cases}
\end{equation}
see Figures~\ref{fig:t5c2}, \ref{fig:t5c3wi}, \ref{fig:t5c4},  \ref{fig:t5c0},  \ref{fig:t5c1}. This proves that Theorem~\ref{theo:kpa-conn-tri} is also asymptotically tight
for $\kappa=5$. Furthermore, we conjecture that the repetition of the  obvious  pattern in these figures provide the extremal triangulations. 
We arrived to these conjectures using computer and also guesswork regarding the pattern. 
Therefore these conjectures  for the 5-connected case are less supported with
computational evidence than other conjectures in this paper,  as we were able to do the computation only up to the order $32$, 
see Table~\ref{tab:t5csummary}.  The issue is that the pattern slowly develops, and  orders following the same pattern differ by 5---therefore we do
not have sufficiently many data points to have a very convincing conjecture.

We are indebted to Paul Kainen, who after hearing about our triangulation results, asked whether we can prove similar
results for simple quadrangulations. Recall that any simple quadrangulation is 2-connected, but no simple quadrangulation is 4-connected.
We conjecture that the maximum Wiener index of a simple quadrangulation of order $n$ is
\begin{equation} \label{quadwiener}
W(Q_n) =
\begin{cases}
\frac{n^3}{12} + \frac{7n}{6} - 2 & \text{if } n=2k \\
\frac{n^3}{12} + \frac{11n}{12} - 1 & \text{if } n=2k+1,\\

\end{cases}
\end{equation}
based on Figures~\ref{fig:q0}, \ref{fig:q1}. Furthermore, we conjecture that the repetition of the  obvious  pattern in these figures provide the extremal quadrangulations. Using computer, we verified this conjecture for  simple quadrangulations  of order 
not exceeding $20$, see Table~\ref{tab:qsummary}.

We conjecture that  the maximum Wiener index of a 3-connected simple quadrangulation of order $n$ is 
\begin{equation} \label{quadwiener2}
W(Q_n^3) =
\begin{cases}
\frac{n^3}{18} + \frac{n^2}{3} - \frac{17n}{6} +\frac{206}{9} & \text{if } n=3k+14 \\
\frac{n^3}{18} + \frac{n^2}{3} - \frac{17n}{6} + 20 & \text{if } n=3k+15 \\
\frac{n^3}{18} + \frac{n^2}{3} - \frac{17n}{6} + \frac{184}{9} & \text{if } n=3k+16 ,\\
\end{cases}
\end{equation}
based on Figures~\ref{fig:q3c2}, \ref{fig:q3c0},  \ref{fig:q3c1}. Furthermore, we conjecture that the repetition of the  obvious  pattern in these figures provide the extremal quadrangulations. Using computer, we verified this conjecture for  simple quadrangulations  of order 
not exceeding $28$, see Table~\ref{tab:q3csummary}.

Section~\ref{comput} contains the conjectures stated so far in the form of drawings for some fixed order, but with emphasis on the general pattern:
the red colored part is the repeated pattern. 
Even more, we conjecture based on computational evidence that
those drawings not only provide the maximum Wiener index, but for sufficiently large $n$ they are unique with this property.

We remark here that the result of \cite{ervinarxive} described by formula (\ref{withcases}) does not hold for {\it non-simple} triangulations. For the construction of
non-simple triangulations  with asymptotically larger Wiener indices, see Figure~\ref{nonsimpleTn}.
In fact, we conjecture that these constructions are optimal for non-simple  triangulations. %
The non-simple  quadrangulation on Figure~\ref{nonsimpleQn} has a larger Wiener index than conjectured best simple
quadrangulation on Figure~\ref{fig:q0}, but difference is not in the leading term.  For the rest of the paper, under the terms triangulation 
and quadrangulation we will always understand simple triangulation 
and quadrangulation.

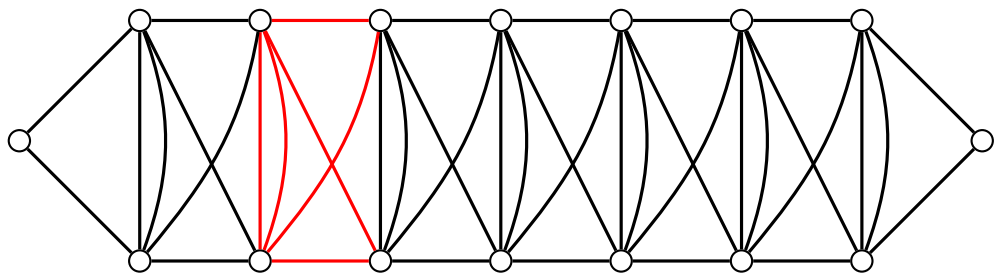
\begin{figure}[htbp]
	\begin{center}
		\begin{tikzpicture}
		[scale=0.8,inner sep=1mm,line/.style={-}%
		vertex/.style={circle,thick,draw}, %
		thickedge/.style={line width=2pt}] %
		\node[circle,thick,draw] (a1) at (0,0) [fill=white] {};
		\node[circle,thick,draw] (a2) at (2,0) [fill=white] {};
		\node[circle,thick,draw] (a3) at (4,0) [fill=white] {};
		\node[circle,thick,draw] (a4) at (6,0) [fill=white] {};
		\node[circle,thick,draw] (a5) at (8,0) [fill=white] {};
		\node[circle,thick,draw] (a6) at (10,0) [fill=white] {};
		\node[circle,thick,draw] (a7) at (12,0) [fill=white] {};
		\node[circle,thick,draw] (x) at (-2,2) [fill=white] {};
		\node[circle,thick,draw] (y) at (14,2) [fill=white] {};
		\node[circle,thick,draw] (c1) at (0,4) [fill=white] {};
		\node[circle,thick,draw] (c2) at (2,4) [fill=white] {};
		\node[circle,thick,draw] (c3) at (4,4) [fill=white] {};
		\node[circle,thick,draw] (c4) at (6,4) [fill=white] {};
		\node[circle,thick,draw] (c5) at (8,4) [fill=white] {};
		\node[circle,thick,draw] (c6) at (10,4) [fill=white] {};
		\node[circle,thick,draw] (c7) at (12,4) [fill=white] {};    
		\draw[very thick] (a1)--(x);
		\draw[very thick] (c1)--(x);
		\draw[very thick] (a7)--(y);
		\draw[very thick] (c7)--(y);
		\draw[very thick] (a1)--(a2) (a3)--(a4)--(a5)--(a6)--(a7);  
		\draw[very thick,red] (a2)--(a3);  

		\draw[very thick] (c1)--(c2) (c3)--(c4)--(c5)--(c6)--(c7);
		 \draw[very thick,red] (c2)--(c3);
		\path[line,very thick,out=-70,in=70] (c1) edge (a1);
		\draw[very thick] (c1)--(a1);  
		\path[red,line,very thick,out=-70,in=70] (c2) edge (a2);
		\draw[very thick,red] (c2)--(a2); 
		\path[line,very thick,out=-70,in=70] (c3) edge (a3);     
		\draw[very thick] (c3)--(a3);  
		\path[line,very thick,out=-70,in=70] (c4) edge (a4);  
		\draw[very thick] (c4)--(a4);  
		\path[line,very thick,out=-70,in=70] (c5) edge (a5); 
		\draw[very thick] (c5)--(a5);  
		\path[line,very thick,out=-70,in=70] (c6) edge (a6); 
		\draw[very thick] (c6)--(a6);  
		\path[line,very thick,out=-70,in=70] (c7) edge (a7);
		\draw[very thick] (c7)--(a7);  
		
		\path[line,very thick,out=50,in=260]  (a1) edge (c2);  
		\path[red,line,very thick,out=50,in=260] (a2) edge (c3); 
		\path[line,very thick,out=50,in=260] (a3) edge (c4); 
		\path[line,very thick,out=50,in=260] (a4) edge (c5); 
		\path[line,very thick,out=50,in=260] (a5) edge (c6); 
		\path[line,very thick,out=50,in=260] (a6) edge (c7); 
		\draw[very thick] (c1)--(a2);  
		\draw[very thick,red] (c2)--(a3); 
		\draw[very thick] (c3)--(a4); 
		\draw[very thick] (c4)--(a5); 
		\draw[very thick] (c5)--(a6); 
		\draw[very thick] (c6)--(a7);    
		\end{tikzpicture}
		\caption{A non-simple  triangulation with larger Wiener index. $W(T_n')=\frac{n^3}{12}+\frac{2n}{3}-1$ for even $n$.}
		\label{nonsimpleTn}
	\end{center}
\end{figure}

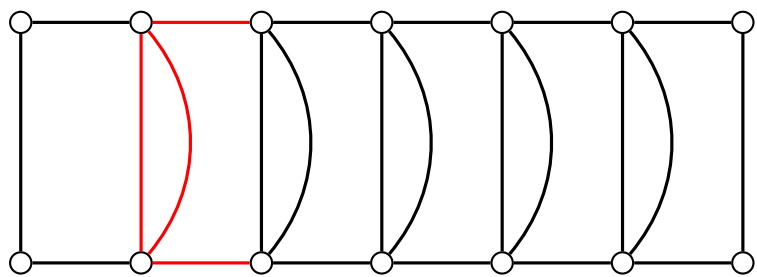
\begin{figure}[htbp]
	\begin{center}
		\begin{tikzpicture}
		[scale=0.8,inner sep=1mm,line/.style={-}%
		vertex/.style={circle,thick,draw}, %
		thickedge/.style={line width=2pt}] %
		\node[circle,thick,draw] (a1) at (0,0) [fill=white] {};
		\node[circle,thick,draw] (a2) at (2,0) [fill=white] {};
		\node[circle,thick,draw] (a3) at (4,0) [fill=white] {};
		\node[circle,thick,draw] (a4) at (6,0) [fill=white] {};
		\node[circle,thick,draw] (a5) at (8,0) [fill=white] {};
		\node[circle,thick,draw] (a6) at (10,0) [fill=white] {};
		\node[circle,thick,draw] (a7) at (12,0) [fill=white] {};
		\node[circle,thick,draw] (c1) at (0,4) [fill=white] {};
		\node[circle,thick,draw] (c2) at (2,4) [fill=white] {};
		\node[circle,thick,draw] (c3) at (4,4) [fill=white] {};
		\node[circle,thick,draw] (c4) at (6,4) [fill=white] {};
		\node[circle,thick,draw] (c5) at (8,4) [fill=white] {};
		\node[circle,thick,draw] (c6) at (10,4) [fill=white] {};
		\node[circle,thick,draw] (c7) at (12,4) [fill=white] {};    
		\draw[very thick] (a1)--(a2) (a3)--(a4)--(a5)--(a6)--(a7);  
		\draw[very thick,red] (a2)--(a3);
		\draw[very thick] (c1)--(c2) (c3)--(c4)--(c5)--(c6)--(c7); 
		\draw[very thick,red] (c2)--(c3);
		\draw[very thick] (c1)--(a1);  
		\path[red,line,very thick,out=-50,in=50] (c2) edge (a2);
		\draw[very thick,red] (c2)--(a2); 
		\path[line,very thick,out=-50,in=50] (c3) edge (a3);     
		\draw[very thick] (c3)--(a3);  
		\path[line,very thick,out=-50,in=50] (c4) edge (a4);  
		\draw[very thick] (c4)--(a4);  
		\path[line,very thick,out=-50,in=50] (c5) edge (a5); 
		\draw[very thick] (c5)--(a5);  
		\path[line,very thick,out=-50,in=50] (c6) edge (a6); 
		\draw[very thick] (c6)--(a6);  
		\draw[very thick] (c7)--(a7);  
		\end{tikzpicture}
		\caption{A non-simple  quadrangulation with larger Wiener index. $W(Q_n')=\frac{n^3}{12}+\frac{n^2}{4}-\frac{n}{3}$ for even $n$.
		For this sequence, 	see A131423 \cite{sloane}.} 
		\label{nonsimpleQn}
	\end{center}
\end{figure}

Che and Collins noted \cite{CheCol2019}  that the {\sl minimum} Wiener index of a triangulation of order $n$ is a trivial problem, as 
Euler's formula determines the number of edges, and there are  constructions, in which every pair of vertices are at most 
distance two.   The situation is analogous for quadrangulations.
For  minimizers, see Figure~\ref{fig:mini}.

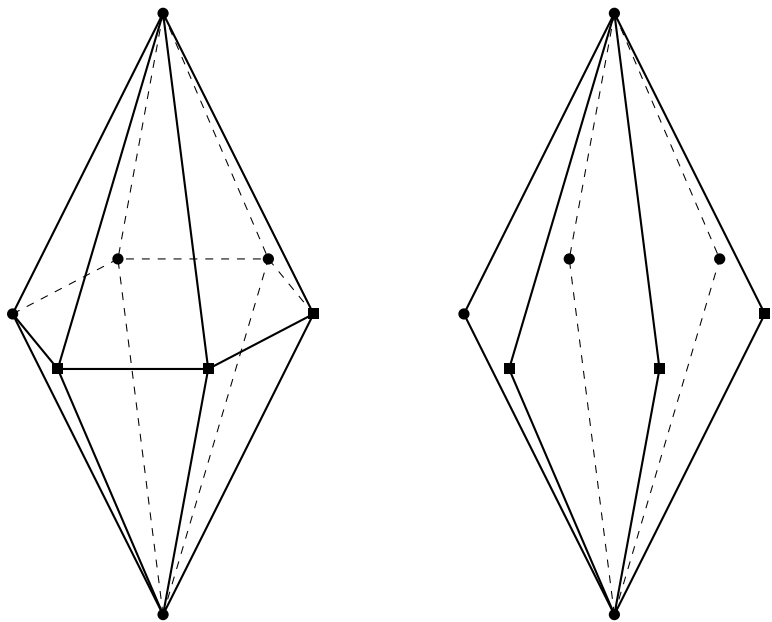
\begin{figure}[htbp]%
\begin{center}
\begin{tikzpicture}[line/.style={-}] 
         \node[fill=black,circle,inner sep=1.5pt]  at (-2,0) {};
        \node[fill=black,circle,inner sep=1.5pt]  at (-.6,0.732) {};
        \node[fill=black,circle,inner sep=1.5pt]  at (1.4,0.732) {};
        \node[fill=black,rectangle,inner sep=2pt]  at (2,0) {};
        \node[fill=black,rectangle,inner sep=2pt]  at (-1.4,-0.732) {};
        \node[fill=black,rectangle,inner sep=2pt]  at (.6,.-0.732) {};
         \node[fill=black,circle,inner sep=1.5pt]  at (0,4) {};
        \node[fill=black,circle,inner sep=1.5pt]  at (0,-4) {};        
 	\draw[thick] (0,4)--(-2,0);
 	\draw[dashed] (0,4)--(-.6,0.732);
 	\draw[dashed] (0,4)--(1.4,0.732);
 	\draw[thick] (0,4)--(2,0);
 	\draw[thick] (0,4)--(-1.4,-0.732);
 	\draw[thick] (0,4)--(.6,-0.732);
 	\draw[thick] (0,-4)--(-2,0);
 	\draw[dashed] (0,-4)--(-.6,0.732);
 	\draw[dashed] (0,-4)--(1.4,0.732);
	\draw[thick] (0,-4)--(2,0);
 	\draw[thick] (0,-4)--(-1.4,-0.732);
 	\draw[thick] (0,-4)--(.6,-0.732);	
 	\draw[thick] (2,0)--(.6,-0.732)--(-1.4,-0.732)--(-2,0);
 	\draw[dashed] (-2,0)--(-.6,0.732)--(1.4,0.732)--(2,0);        
        \node[fill=black,circle,inner sep=1.5pt]  at (4,0) {};
        \node[fill=black,circle,inner sep=1.5pt]  at (5.4,0.732) {};
        \node[fill=black,circle,inner sep=1.5pt]  at (7.4,0.732) {};
        \node[fill=black,rectangle,inner sep=2pt]  at (8,0) {};
        \node[fill=black,rectangle,inner sep=2pt]  at (4.6,-0.732) {};
        \node[fill=black,rectangle,inner sep=2pt]  at (6.6,.-0.732) {};
         \node[fill=black,circle,inner sep=1.5pt]  at (6,4) {};
        \node[fill=black,circle,inner sep=1.5pt]  at (6,-4) {};        
 	\draw[thick] (6,4)--(4,0);
 	\draw[dashed] (6,4)--(5.4,0.732);
 	\draw[dashed] (6,4)--(7.4,0.732);
 	\draw[thick] (6,4)--(8,0);
 	\draw[thick] (6,4)--(4.6,-0.732);
 	\draw[thick] (6,4)--(6.6,-0.732);
 	\draw[thick] (6,-4)--(4,0);
 	\draw[dashed] (6,-4)--(5.4,0.732);
 	\draw[dashed] (6,-4)--(7.4,0.732);
	\draw[thick] (6,-4)--(8,0);
 	\draw[thick] (6,-4)--(4.6,-0.732);
 	\draw[thick] (6,-4)--(6.6,-0.732);	
\end{tikzpicture}
\caption{Minimizers for the Wiener index of simple triangulations and quadrangulations.}
\label{fig:mini}
\end{center}

\end{figure}

There is second research direction of this paper, in addition to the Wiener index.
We  give bounds on the total distance $\sigma(v)$ and the average
distance $\overline{\sigma}(v)$ of a vertex $v$, 
defined as the sum and the average, respectively, of the distances from $v$ to all 
other vertices. Bounds on $\sigma(v)$ were obtained, for example, in \cite{BarEntSze1997}
\cite{EntJacSny1976} and \cite{Zel1968}. Of particular interest is %
the maximum value 
over all $v\in V(G)$ of $\overline{\sigma}(v)$ in a graph $G$,  
usually referred to as %
the {\it remoteness} $\rho(G)$, 
of $G$. 
 It was shown by Zelinka \cite{Zel1968} and, independently, by 
Aouchiche and Hansen \cite{AouHan2011} that the %
remoteness is at most
 $\frac{n}{2}$. %
 For graphs of given
minimum degree $\delta$ these bounds were improved in \cite{Dan2015} by a factor of 
about $\frac{3}{\delta+1}$. For more recent results on %
remoteness see, for example, 
\cite{Dan2016},  and \cite{WuZha2012}.  %

 In this paper we give  sharp upper bounds
on remoteness of  triangulations and quadrangulations  with given connectivity in Corollary~\ref{coro:remote}
and Proposition~\ref{prop:remote}. The bounds are sharp in Proposition \ref{prop:remote} and Corollary \ref{coro:remote} by Figures \ref{fig:t0} through \ref{fig:t5c2} and Figures \ref{fig:t5c3remoteness} through \ref{fig:q3c1}. %
It is not difficult to compute the
distances on those figures from the black vertex to the remaining vertices and show that the sum of distances from the  black vertex meets the upper bound for remoteness. Details will be provided in the Ph.D. dissertation of the third author. Our results show that the maximum remoteness
among  triangulations and quadrangulations of prescribed connectivity $\kappa$ is achieved on those ones that are conjectured to maximize the Wiener index,
{\it except for 5-connected triangulations of order $n=5k+3$}.
There are, however, lots of different realizations of the maximum of remoteness in all classes that we
investigate, except among quadrangulations.

\section{Upper Bounds on the Remoteness of Triangulations and Quadrangulations}

In this section we present bounds on the remoteness of 
triangulations and quadrangulations. A sharp upper bound on the remoteness of 
a triangulation of given order was given by Che and Collins \cite{CheCol2019}.  
We give corresponding bounds for $4$-connected and 
$5$-connected triangulations, as well as for $2$-connected 
and $3$-connected quadrangulations.

We begin by stating a sharp bound on the distance of an arbitrary vertex in a 
$\kappa$-connected graph of given order due to Favaron, Kouider and Mah\'{e}o 
\cite{FavKouMah1989}, from which we will derive some of our bounds.

\begin{prop} {\rm \cite{FavKouMah1989}} \label{prop:bound-per-conn}
Let $G$ be a $\kappa$-connected graph of order $n$, and $x$ an arbitrary vertex
of $G$. Then
\[ \sigma(x) \leq \left\lfloor \frac{n+\kappa-1}{\kappa} \right\rfloor
               \left(n-1-\frac{\kappa}{2}\left\lfloor \frac{n-1}{\kappa}\right\rfloor\right).  \]
\end{prop}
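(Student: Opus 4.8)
The plan is to study $\sigma(x)$ through the partition of $V(G)$ into distance classes around $x$. Let $e=e(x)$ be the eccentricity of $x$, and recall that $N_i(x)$ is the set of vertices at distance exactly $i$ from $x$ with $n_i(x)=|N_i(x)|$. Then $n_0(x)=1$, $\sum_{i=0}^{e} n_i(x)=n$, and
\[ \sigma(x)=\sum_{i=1}^{e} i\, n_i(x). \]
Thus the bound will follow once we (i) identify the constraints that $\kappa$-connectivity imposes on the profile $(n_1(x),\dots,n_e(x))$, and (ii) maximize this weighted sum subject to those constraints.

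The structural input, and the step I expect to carry the real content, is the claim that $n_i(x)\ge \kappa$ for every $i$ with $1\le i\le e-1$. To see this, fix such an $i$ and choose a vertex $w$ with $d_G(x,w)=e$. Since the distances from $x$ of the two endpoints of any edge differ by at most $1$, along any $(x,w)$-path the distance label climbs in steps of at most one from $0$ to $e$, so the path must contain a vertex of $N_i(x)$. Hence $N_i(x)$ is an $(x,w)$-separator. By Menger's theorem there are $\kappa$ internally disjoint $(x,w)$-paths, and since these paths meet $N_i(x)$ in distinct (internal) vertices, we obtain $n_i(x)\ge \kappa$. The top class only satisfies the trivial $n_e(x)\ge 1$.

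With these constraints the remainder is a deterministic optimization. First fix $e$ and maximize $\sum_{i=1}^e i\,n_i(x)$ subject to $\sum_{i=1}^e n_i(x)=n-1$, $n_i(x)\ge\kappa$ for $i<e$, and $n_e(x)\ge 1$. Because the coefficients $i$ increase with the level, an exchange argument applies: moving one vertex from a class $i<e$ that still holds more than $\kappa$ vertices up to the top class $e$ raises the sum by $e-i>0$ without violating any constraint. Hence the optimal profile puts exactly $\kappa$ vertices in each class below $e$ and the remaining $n-1-(e-1)\kappa$ in class $e$, giving
\[ \sigma(x)\le e(n-1)-\tfrac{\kappa}{2}\,e(e-1)=:g(e). \]

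It remains to optimize $g(e)$ over the integers. As a function of $e$ it is a downward parabola with real maximizer $\frac{n-1}{\kappa}+\frac12$, so its integer maximum is attained at $e=\big\lfloor\frac{n-1}{\kappa}\big\rfloor+1=\big\lfloor\frac{n+\kappa-1}{\kappa}\big\rfloor$; substituting this value into $g$ yields exactly the claimed expression. The one point needing care is feasibility when $\kappa$ divides $n-1$: then the profile forcing $e=\big\lfloor\frac{n-1}{\kappa}\big\rfloor+1$ would leave $n_e(x)=0$, so this eccentricity is not attainable. However, in precisely that case the discrete increment $g\big(\lfloor\frac{n-1}{\kappa}\rfloor+1\big)-g\big(\lfloor\frac{n-1}{\kappa}\rfloor\big)=(n-1)-\kappa\lfloor\frac{n-1}{\kappa}\rfloor$ equals $0$, so the feasible eccentricity $\big\lfloor\frac{n-1}{\kappa}\big\rfloor$ realizes the same value and the stated bound still holds.
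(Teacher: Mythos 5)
Your proof is correct: the layer inequality $n_i(x)\ge\kappa$ for $1\le i\le e(x)-1$ via Menger's theorem, the exchange argument pushing surplus vertices into the top layer, and the integer optimization of $g(e)=e(n-1)-\frac{\kappa}{2}e(e-1)$ (including the tie $g(q)=g(q+1)$ when $\kappa\mid n-1$) all check out and yield exactly the stated bound. The paper does not prove this proposition itself --- it is quoted from Favaron, Kouider and Mah\'eo --- but your argument is the standard one and is essentially the same layer-sequence maximization the authors do carry out in their proof of Proposition~\ref{prop:remote}, so nothing further is needed.
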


Every simple triangulation is $3$-connected, and every simple
quadrangulation is $2$-connected.  Proposition \ref{prop:bound-per-conn}
yields thus the following  
sharp bounds for the remoteness of $3$-connected and $4$-connected triangulations
and $2$-connected quadrangulations. 

\begin{coro}  \label{coro:remote}
(a) {\rm \cite{CheCol2019}} If $G$ is a simple triangulation 
of order $n$, then
\[ \rho(G) \leq \frac{n+2}{6} + \varepsilon_n,  \] 
where $\varepsilon_n=0$ if $n\equiv 1\pmod{3}$, and 
$\varepsilon_n=\frac{1}{3(n-1)}$ if $n\equiv 0,2 \pmod{3}$. \\
(b) If $G$ is a $4$-connected triangulation  
of order $n$, then
\[ \rho(G) \leq \frac{n+3}{8} + \varepsilon_n,  \]
where $\varepsilon_n=0$ if $n\equiv 1 \pmod{4}$, 
$\varepsilon_n=\frac{3}{8(n-1)}$ if $n\equiv 0,2\pmod{4}$, and 
$\varepsilon_n=\frac{1}{2(n-1)}$ if $n\equiv 3 \pmod{4}$. \\
(c) If $G$ is a simple quadrangulation of order $n$, then
\[ \rho(G) \leq \frac{n+1}{4} + \varepsilon_n,  \] 
where $\varepsilon_n=0$ if $n\equiv 1\pmod{2}$, and 
$\varepsilon_n=\frac{1}{4(n-1)}$ if $n\equiv 0 \pmod{2}$. 
\hfill $\Box$ \\
\end{coro}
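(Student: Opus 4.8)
The plan is to obtain all three bounds directly from Proposition~\ref{prop:bound-per-conn} by dividing the total-distance estimate by $n-1$ and then resolving the floor functions according to the residue of $n$ modulo $\kappa$. The starting point is the observation that, by definition, $\rho(G)=\max_{v\in V(G)}\overline{\sigma}(v)=\frac{1}{n-1}\max_{v\in V(G)}\sigma(v)$. Since every simple triangulation is $3$-connected and every simple quadrangulation is $2$-connected, I would apply Proposition~\ref{prop:bound-per-conn} with $\kappa=3$ in part~(a), with $\kappa=4$ in part~(b) (here invoking the $4$-connectivity hypothesis), and with $\kappa=2$ in part~(c). This immediately gives
\[ \rho(G)\leq \frac{1}{n-1}\left\lfloor\frac{n+\kappa-1}{\kappa}\right\rfloor\left(n-1-\frac{\kappa}{2}\left\lfloor\frac{n-1}{\kappa}\right\rfloor\right), \]
so the only remaining task is to evaluate the right-hand side in each residue class and match it against the stated $\varepsilon_n$.

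First I would treat the \emph{clean} residue class $n\equiv 1\pmod{\kappa}$, where both floors are exact. Writing $n-1=\kappa k$ gives $\lfloor (n+\kappa-1)/\kappa\rfloor=k+1$ and $\lfloor (n-1)/\kappa\rfloor=k$, so the parenthesised factor becomes $\kappa k-\frac{\kappa}{2}k=\frac{\kappa k}{2}$ and the bound collapses to $\frac{(k+1)\kappa k/2}{\kappa k}=\frac{k+1}{2}=\frac{n+\kappa-1}{2\kappa}$. This reproduces the stated main term (namely $\frac{n+2}{6}$, $\frac{n+3}{8}$, $\frac{n+1}{4}$ for $\kappa=3,4,2$) with $\varepsilon_n=0$. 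For the remaining residue classes I would write $n=\kappa k+r$, evaluate the two floors explicitly, simplify the resulting fraction, and compare it with the main term $\frac{n+\kappa-1}{2\kappa}$; the difference is exactly the recorded $\varepsilon_n$. A uniform way to verify each value of $\varepsilon_n$ is to put the exact fraction and $\frac{n+\kappa-1}{2\kappa}$ over the common denominator $2\kappa(n-1)$ and compare numerators, which reduces every case to a short polynomial identity in $k$.

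The step I expect to require the most care is not conceptual but bookkeeping: keeping the floors correct across the residue classes of part~(b), where $\kappa=4$ produces four subcases and $\lfloor (n+3)/4\rfloor$ behaves differently for $n\equiv 2$ and $n\equiv 3\pmod{4}$, so that $\varepsilon_n$ takes the two distinct nonzero values $\frac{3}{8(n-1)}$ and $\frac{1}{2(n-1)}$. There is no genuine obstacle beyond this arithmetic, since sharpness is witnessed separately by the explicit constructions discussed in the paper (the black vertex in the extremal figures from Figure~\ref{fig:t0} through Figure~\ref{fig:q3c1}). Thus the corollary is precisely a per-residue simplification of the Favaron--Kouider--Mah\'eo inequality.
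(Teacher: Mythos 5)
Your proposal is correct and is exactly the paper's (largely implicit) derivation: the corollary is stated as an immediate consequence of Proposition~\ref{prop:bound-per-conn}, obtained by applying it with $\kappa=3,4,2$ respectively, dividing by $n-1$, and resolving the floors per residue class, with sharpness deferred to the explicit constructions in the figures. The residue-class arithmetic you outline checks out against the stated values of $\varepsilon_n$.
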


Proposition \ref{prop:bound-per-conn} also yields good bounds
for the remoteness of $5$-connected triangulations and $3$-connected 
quadrangulations. These bounds are however not sharp for all values of $n$.
In order to obtain sharp bounds we need some additional terminology 
and results from \cite{AliDanMuk2012}. 

Let $v$ be a fixed vertex of a connected plane graph and $i\in \mathbb{N}$ with $i<e(v)$. 
We say that a vertex $w\in N_i(v)$ is \emph{active} if it has a neighbor
in $N_{i+1}(v)$.

\begin{la} {\rm \cite{AliDanMuk2012}}  \label{la:active-vertices}
Let $G$ be a $3$-connected plane graph, $v$ a vertex of $G$ and $i\in \mathbb{N}$
with $1 \leq i \leq e(v)-1$. For every active vertex $w \in N_i(v)$ there exist
two other active vertices $w', w'' \in N_i(v)$ such that $w$ and $w'$ share a 
face of $G$, and $w$ and $w''$ also share a face of $G$. 
\end{la}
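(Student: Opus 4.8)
The plan is to analyze the planar embedding in a neighborhood of $w$ and to reduce the statement to a threshold-crossing count on a single auxiliary cycle. First I would record the two facts about the levels of the neighbors of $w$ supplied by the hypotheses. Since $w$ is active, it has a neighbor $u$ with $d_G(v,u)=i+1$; and since $i\ge 1$, the vertex $w$ lies at positive distance from $v$, so it has a neighbor $a$ with $d_G(v,a)=i-1$ lying on a shortest path from $v$ to $w$. Thus among the neighbors of $w$ there is at least one vertex at level $i+1$ and at least one at level $i-1$.

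Next I would build the auxiliary cycle. Because $G$ is $3$-connected, $G-w$ is $2$-connected, and in a $2$-connected plane graph every face is bounded by a cycle. When $w$ is deleted, the faces of $G$ incident to $w$ merge into a single face $F^{\ast}$ of $G-w$; let $C$ denote its bounding cycle. The structural observations I would establish are: (i) every neighbor of $w$ lies on $C$, since each edge $wu_j$ borders two faces incident to $w$ and hence $u_j$ survives on $\partial F^{\ast}$; and (ii) every vertex of $C$ shares a face of $G$ with $w$, since each vertex of $C$ lies on the boundary of one of the faces of $G$ that were incident to $w$. Observation (ii) is what lets me harvest the required vertices $w'$ and $w''$ directly from $C$, and since $w\notin C$, any vertex I find on $C$ is automatically distinct from $w$.

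The heart of the argument is then a counting argument on $C$ using the level function $\ell(z)=d_G(v,z)$. Consecutive vertices of $C$ are adjacent in $G$, so $\ell$ changes by at most $1$ around $C$; by the first paragraph $C$ carries a vertex with $\ell=i+1$ (namely $u$) and one with $\ell=i-1$ (namely $a$). Hence the set $A=\{z\in C:\ell(z)\ge i+1\}$ is nonempty and proper, so the set of edges of $C$ with exactly one endpoint in $A$ is even in number and nonempty, and the Lipschitz property forces each such edge to join a vertex at level $i+1$ to one at level exactly $i$. Every such level-$i$ endpoint is adjacent in $G$ to a level-$(i+1)$ vertex, hence lies in $N_i(v)$ and is active. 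I would then check that these endpoints yield at least two distinct vertices: if $A$ is a single arc, its two flanking level-$i$ vertices coincide only when the complement of $A$ is a single vertex, which is impossible because that vertex would have to be at level $i$ while also equaling $a$ at level $i-1$; and if $A$ consists of several arcs there are even more such vertices. Taking any two of them as $w'$ and $w''$ completes the proof.

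The step I expect to require the most care is the construction and justification of the cycle $C$: one must invoke $3$-connectivity precisely to guarantee that the union of the faces around $w$ is bounded by a genuine simple cycle rather than a closed walk with repeated vertices, and one must verify observation (ii) so that membership in $C$ certifies sharing a face with $w$ in $G$ itself, not merely in $G-w$. The remaining counting is routine once the degenerate single-arc case is dispatched.
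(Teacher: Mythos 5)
The paper itself offers no proof of this lemma: it is imported verbatim from Ali, Dankelmann and Mukwembi \cite{AliDanMuk2012} and used as a black box in the proof of Lemma~\ref{la:tail-of-distance-sequence}(b), so there is no in-paper argument to compare yours against; I have therefore checked your proof on its own terms, and it is correct. The two structural facts you isolate both hold: since $G$ is $3$-connected, $G-w$ is $2$-connected, the faces of $G$ incident to $w$ coalesce into a single face of $G-w$ bounded by a cycle $C$, this $C$ contains every neighbour of $w$ (so in particular $u$ at level $i+1$ and $a$ at level $i-1$), and every vertex of $C$ lies on the boundary of one of the original faces at $w$ and hence shares a face of $G$ with $w$. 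The threshold-crossing count is also sound: the edge cut of $A=\{z\in C: d_G(v,z)\ge i+1\}$ in the cycle $C$ is nonempty and even, the unit-Lipschitz property of the level function along edges forces every cut edge to pair a level-$(i+1)$ vertex with a level-$i$ vertex, and each such level-$i$ endpoint is active, lies on $C$, and is therefore distinct from $w$ and shares a face with $w$. The only place I would tighten the wording is the distinctness step: in the several-arc case the clean statement is not that there are ``even more such vertices'' (distinct cut edges can share level-$i$ endpoints) but that the complement of $A$ in $C$ splits into at least two vertex-disjoint arcs, each contributing at least one level-$i$ endpoint, which already yields two distinct witnesses; your one-arc case is handled correctly, since the complement arc contains $a$ strictly in its interior and so has two distinct level-$i$ endpoints. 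With that one-line clarification the argument is complete and would serve as a legitimate self-contained proof of the cited lemma.
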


\begin{la}   \label{la:tail-of-distance-sequence} 
(a) Let $G$ be a $5$-connected simple triangulation, $v$ a vertex of $G$ and $d=e_G(v)$.
If $n_{d-1}(v)=5$, then $n_d(v)=1$. \\
(b) Let $G$ be a $3$-connected simple quadrangulation, $v$ a vertex of $G$ and $d=e_G(v)$. 
If $n_{d-1}(v)=3$, then $n_d(v)=1$. If $n_{d-2}(v)=3$ and $n_{d-1}(v)=4$, then $n_d(v)> 1$. 
\end{la}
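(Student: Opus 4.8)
The plan is to analyze all three assertions through the BFS layers around $v$. Write $A=N_d(v)$, $B=N_{d-1}(v)$ and $a=n_d(v)$, and let $I=\bigcup_{0\le j\le d-2}N_j(v)$. Two facts are used throughout. Since $d=e_G(v)$, we have $N_{d+1}(v)=\emptyset$, so every vertex of $A$ has all of its neighbours in $A\cup B$ and, being at distance exactly $d$ from $v$, at least one neighbour in $B$. Since $G$ is $\kappa$-connected, every vertex has degree at least $\kappa$, and $B$ is a vertex cut separating $A$ from $v$, hence a \emph{minimum} cut when $|B|=\kappa$. Finally, $I$ is connected and every vertex of $B$ has a neighbour in $N_{d-2}(v)\subseteq I$, so contracting a connected set built from the lower layers to a single vertex yields a planar minor in which the new vertex is adjacent to a prescribed lower layer.

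For part (b) I would exploit that a quadrangulation is bipartite, whence no edge joins two vertices of the same layer and every edge joins consecutive layers. For the first statement, $|B|=3$ and minimum degree $3$ force every vertex of $A$ to be adjacent to all of $B$; contracting $I$ to a vertex $z$ makes $z$ adjacent to all of $B$ as well, so if $n_d\ge2$ then two vertices of $A$ together with $z$ are three vertices each adjacent to all three vertices of $B$, a $K_{3,3}$ minor, which is impossible in a planar graph; hence $n_d=1$. For the second statement I would argue by contradiction: suppose $n_d=1$, say $A=\{w\}$, and put $C=N_{d-2}(v)$, so $|C|=3$ and $|B|=4$. Let $q=\deg(w)\ge3$ count the edges from $w$ into $B$ and let $p=m(B,C)$. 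Each of the $4-q$ vertices of $B$ not adjacent to $w$ must send all three of its required edges into $C$, while each of the $q$ vertices adjacent to $w$ still needs at least two edges into $C$; hence $p\ge 2q+3(4-q)=12-q$, i.e.\ $p+q\ge12$. Contracting $\bigcup_{j\le d-3}N_j(v)$ to a vertex $z$ adjacent to all of $C$ then produces a simple bipartite planar graph on $1+3+4+1=9$ vertices (colour $z$ opposite to $C$) with at least $3+p+q\ge15$ edges, contradicting the bound $2\cdot9-4=14$ for simple bipartite planar graphs. Thus $n_d>1$.

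For part (a), where $|B|=5$, the idea is to bound the number of edges inside $A$ from both sides. Contracting $B\cup I$ (which is connected) to a single vertex $\beta$ makes $\beta$ adjacent to every vertex of $A$; since this contracted graph is a planar minor of $G$, the graph obtained from $G[A]$ by adding one vertex joined to all of $A$ is planar, i.e.\ $G[A]$ is outerplanar, so $|E(G[A])|\le 2a-3$ once $a\ge2$. For the reverse bound, since $B$ is a minimum cut of a $5$-connected triangulation the side containing $A$ is drawn in a disk whose boundary is a $5$-cycle on $B$, with all of $A$ in its interior; Euler's formula for this triangulated disk gives $|E(G[A])|+m(A,B)\le 3a+2$ (interior chords of $B$ only decrease the right-hand side). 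Combining this with $2\,|E(G[A])|+m(A,B)=\sum_{u\in A}\deg(u)\ge 5a$ and subtracting yields $|E(G[A])|\ge 2a-2$. For $a\ge2$ this contradicts the outerplanar bound, so $a=1$.

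The degree counts and the applications of Euler's formula are routine; the step I expect to require the most care is the topological claim in part (a) that the component of $G-B$ containing $A$ is enclosed by a single Hamiltonian $5$-cycle on $B$ and forms a triangulated disk. I would establish this in three moves. First, each component of $G-B$ is adjacent to all five vertices of $B$, for otherwise fewer than five vertices would separate it, contradicting $5$-connectivity. Second, there are then exactly two components, since three components each adjacent to all of $B$ would yield a $K_{3,5}$ and hence a forbidden $K_{3,3}$ minor; one component is the $I$-side, so $A$ is the other and is connected. Third, no vertex of $B$ lies in the interior of the $A$-side, because each vertex of $B$ has a neighbour in $I$, so the boundary of that side is a cycle through all of $B$, necessarily a $5$-cycle. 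The only remaining points needing a word of justification are that contracting a connected vertex set produces a (planar) minor, and, in part (b), that assigning the contracted vertex the colour class opposite to $C$ keeps the graph bipartite so that the sharper planar edge bound applies.
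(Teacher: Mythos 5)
Your proof is correct, and it splits naturally into a part that mirrors the paper and a part that does not. For (a) you follow essentially the paper's argument: you establish that $N_d(v)$ lies on one side of a $5$-cycle through $N_{d-1}(v)$, that $G[N_d]$ is outerplanar (so has at most $2n_d-3$ edges), and you play the Euler-formula count for the triangulated region against the degree-sum lower bound $5n_d$; your ``at most two components of $G-B$, else a $K_{3,5}$ and hence $K_{3,3}$ minor'' step is just a repackaging of the paper's three-fan argument producing a $3K_1+C_5$ minor, and your triangulated-disk count $|E(G[A])|+m(A,B)\le 3a+2$ is arithmetically identical to the paper's bound obtained by adding an apex vertex $z$ outside the cycle. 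The one place where you are as terse as the paper (no worse, no better) is the claim that the boundary of the region containing $A$ is a $5$-cycle on $B$; the paper simply invokes that a minimum cutset of a triangulation induces a cycle. For (b), however, your route is genuinely different from and simpler than the paper's. The paper invokes Lemma \ref{la:active-vertices} from \cite{AliDanMuk2012} to add a triangle on the three-element layer and then runs a bipartite-planar edge count on the resulting plane graph. You avoid that lemma entirely: for the first statement you observe that bipartiteness plus minimum degree $3$ forces every vertex of $N_d$ to be complete to $N_{d-1}$, and contracting the connected lower layers gives a third such vertex, yielding a $K_{3,3}$ minor outright; for the second statement your count $p+q\ge 12$, combined with contracting $\bigcup_{j\le d-3}N_j$ to an apex on $N_{d-2}$, gives a simple bipartite planar graph on $9$ vertices with at least $15>14$ edges. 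Both counts check out (note $q\le 4$ since the unique vertex of $N_d$ has all neighbours in $N_{d-1}$, and the contracted graph is bipartite because its only edges run between consecutive layers and from the apex to $N_{d-2}$). What your approach buys is independence from the ``active vertices share a face'' machinery; what the paper's buys is a uniform template that it reuses verbatim for both halves of (b).
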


\begin{proof}
(a) Assume that $G$ is a $5$-connected simple triangulation, $v$ is a vertex of 
$G$, and $n_{d-1}=5$, where $d$ is the eccentricity of $v$. 
This implies that $N_{d-1}$ is a minimum cutset of $G$. Hence, since $G$ is a triangulation,
$N_{d-1}$ induces a cycle $C$ of length $5$. 
We first show that 
\begin{equation} \label{eq:5-conn-tri-in-out-C}
\textrm{the vertices in $N_d$ are all inside $C$, or all outside $C$.}
\end{equation} 
Suppose not. Then there exist vertices $a,b \in N_d$ such that $a$ is inside $C$,
and $b$ is outside $C$. Since $G$ is $5$-connected, there exist a
$(v,N_{d-1})$-fan $F_v$, an $(a,N_{d-1})$-fan $F_a$, and a $(b,N_{d-1})$-fan $F_b$. 
Any two of these three fans share only the vertices of $N_{d-1}$. Indeed, 
other than vertices in $N_{d-1}$, fan $F_v$ contains only
vertices in $\bigcup_{i=0}^{d-1} N_i$, while fan $F_a$ contains only 
vertices in $N_{d-1} \cup N_d$ that are inside $C$, while 
fan $F_b$ contains only vertices in $N_{d-1} \cup N_d$ that are outside $C$.   
Now contracting the edges in $F_a-N_{d-1}$, the edges in $F_b-N_{d-1}$, 
and the edges in $F_{v}-N_{d-1}$ to three single vertices yields a graph that
contains $3K_1 + C_5$ as a subgraph. Hence $G$ contains $3K_1+C_5$ as a minor. 
Contracting three consecutive vertices of the $5$-cycle shows that this implies
that $G$ contains $K_3 +3K_1$ as a minor, which contradicts the planarity of $G$.
This contradiction proves \eqref{eq:5-conn-tri-in-out-C}.

By \eqref{eq:5-conn-tri-in-out-C} we may assume that all 
vertices of $N_d$ are inside the cycle $C$. Since every vertex of $N_{d}$ is 
adjacent to some vertex of $N_{d-1}$, the subgraph $G[N_d]$ is outerplanar.
Hence 
\begin{equation}  \label{eq:max-size-of-G[Nd]}
 m(G[N_d]) \leq \left\{ \begin{array}{cc}
         0 & \textrm{if $n_d=1$,} \\
         1 & \textrm{if $n_d =2$,} \\ 
         2n_d-3 & \textrm{if $n_d\geq 3$.} 
            \end{array} \right.
\end{equation}
We now bound the sum of the degrees of the vertices in $N_d$. 
Let $H$ be the plane graph obtained from $G[N_{d-1} \cup N_d]$ by adding 
a new vertex $z$ in the outer face of $C$ and joining it to all five vertices
of $C$. Then $H$ has order $n(H)=1+n_{d-1} + n_d = n_d+6$. Since $H$ is a plane 
graph we have 
$m(H) \leq 3n(H)-6 \leq 3n_d+12$. At least $10$ edges of $H$ are incident with
$z$ or belong to $C$, and are thus not incident with any vertex of $N_d$, so they
don't contribute to the sum of the degrees of vertices in $N_d$. 
Since the edges of $G[N_d]$ contribute two to the sum of the degrees of vertices
in $N_d$, we have 
\[ \sum_{x \in N_d} {\rm deg}_G(x) 
      \leq  (m(H)-10) + m(G[N_d]) 
       \leq \left\{ \begin{array}{cc}
         (3n_d+2)+0 & \textrm{if $n_d=1$,} \\
         (3n_d+2) + 1 & \textrm{if $n_d=2$,} \\         
         (3n_d+2)+(2n_d-3) & \textrm{if $n_d\geq 3$.}          
          \end{array} \right. \]
It is easy to verify that this implies $\sum_{x\in N_d} {\rm deg}_G(x) < 5n_d$ 
whenever $n_d> 1$. But since $G$ is $5$-connected, every vertex of $G$ has
degree at least five. Hence we conclude that $n_d=1$, which proves (a). 

(b) Let $G$ be a $3$-connected simple quadrangulation, $v$ a vertex of $G$, 
and $d=e(v)$. \\
To prove the first statement assume that $n_{d-1}=3$. Let $N_{d-1}(v)=\{w, w', w''\}$. 
Since $G$ is a quadrangulation and thus bipartite, the set $\{w,w,w''\}$ is 
independent in $G$. 
Since $G$ is $3$-connected, the vertices $w, w', w''$ have a neighbor in $N_{d}$
and are thus active. 
By Lemma \ref{la:active-vertices}, $w$ and $w'$ share a face, 
and so do $w$ and $w''$, as well as $w'$ and $w''$. 
Hence we can add edges $ww'$, $ww''$ and 
$ww''$ to $G$ to obtain a plane graph (but not a quadrangulation). 
Let $C$ be the cycle consisting of the edges $ww',w'w'',w''w$. A proof similar
to that in (a) shows that the vertices of $N_d$ are all inside $C$, or all outside
$C$. Without loss of generality we assume the former.  
We now bound the sum of the degrees of the vertices in $N_d$. \\
Let $H$ be the plane graph obtained from $G[N_{d-1} \cup N_d]+E(C)$ by adding 
a new vertex $z$ in the outer face of $C$ and joining it to all three vertices
of $C$. 
Since $G$ is a quadrangulation, the only faces of $H$ of length three
are the six faces that have one of the three edges of $C$ on their
boundary. Let $H'$ be the plane graph $H-E(C)=G[N_{d-1}\cup N_d]$. Then
$n(H')=n_{d-1}+n_d+1=n_d+4$ and, since $H'$ has only faces of length at
least four,  $m(H') \leq 2n(H')-4 = 2n_d+4$. \\  
Exactly three edges of $H$ are incident with
$z$ and are thus not incident with any vertex of $N_d$. 
Since $G$ is bipartite, $G[N_d]$ contains no edges. Hence  
\[ \sum_{x \in N_d} {\rm deg}_G(x) 
      =  (m(H')-3)  
       \leq 2n_d+1. \]
This implies $\sum_{x\in N_d} {\rm deg}_G(x) < 3n_d$ 
whenever $n_d> 1$. But since $G$ is $3$-connected, every vertex of $G$ has
degree at least three. Hence we conclude that $n_d=1$, which proves the first 
statement of (b). \\
To prove the second statement of (b) assume that $n_{d-2}=3$ and $n_{d-1}=4$. 
Suppose to the contrary that $n_d=1$. Let $N_{d-2}=\{w,w',w''\}$. 
The same arguments as in the proof of the first statement of (b) show that 
we can add the edges $ww', ww'', w'w''$ to $G$ to obtain a plane graph, 
such that these three edges form a cycle $C$, and that the vertices in $N_{d-1}\cup N_d$
are either all inside $C$ or all outside $C$, without loss of generality we assume the former. 
Let $H$ be the plane graph obtained from $G[N_{d-2} \cup N_{d-1} \cup N_d]+E(C)$ by adding 
a new vertex $z$ in the outer face of $C$ and joining it to all three vertices
of $C$. 
Since $G$ is a quadrangulation, the only faces of $H$ of length three
are the six faces that have one of the three edges of $C$ on their
boundary. Let $H'$ be the plane graph $H-E(C)=G[N_{d-2} \cup N_{d-1}\cup N_d]$. Then
$n(H')=n_{d-2}+n_{d-1}+n_d+1=9$ and, since $H'$ has only faces of length at
least four,  $m(H') \leq 2n(H')-4 = 14$. Exactly three edges of $H'$ are incident
with $z$ and thus not incident with vertices in $N_{d-1}$. Since $G$ is bipartite, 
no edge joins two vertices of $N_{d-1}$, and so we have
$\sum_{x\in N_{d-1}} {\rm deg}_G(x) \leq 11 < 3n_{d-1}$.
Therefore, $N_{d-1}$ contains a vertex of degree less than three in $G$, which 
contradicts $G$ being $3$-connected.  The second statement of (b) follows. 
\end{proof}

For the remaining proof of this section we define the function $F$ which assigns to a
finite sequence $X=(x_0,x_1,\ldots,x_k)$ of integers the value 
$F(X)=\sum_{i=0}^k ix_i$. So if $v$ is a vertex of eccentricity $d$ in a connected graph $G$, then
$\sigma(v) = \sum_{i=0}^d in_i(v) = F(n_0,n_1,\ldots,n_d)$.

\begin{prop} \label{prop:remote}
(a) Let $G$ be a $5$-connected triangulation of order $n$. Then
\[ \rho(G) \leq \frac{n+4}{10} + \varepsilon_n,  \]
where 
$\varepsilon_n= - \frac{3}{5(n-1)}$ if $n\equiv 0 \pmod{5}$, 
$\varepsilon_n= - \frac{1}{n-1}$ if $n\equiv 1\pmod{5}$, 
$\varepsilon_n= \frac{2}{5(n-1)}$ if $n\equiv 2 \pmod{5}$,
and 
$\varepsilon_n=-\frac{2}{5(n-1)}$ if $n\equiv 3,4 \pmod{5}$. \\
(b) If $G$ is a $3$-connected quadrangulation of order $n$, then 
\[ \rho(G) \leq \frac{n+2}{6} + \varepsilon_n,  \]
where 
$\varepsilon_n= - \frac{5}{3(n-1)}$ if $n\equiv 0 \pmod{3}$, 
$\varepsilon_n=-\frac{1}{n-1}$ if $n\equiv 1\pmod{3}$, and 
$\varepsilon_n= \frac{1}{3(n-1)}$ if $n\equiv 2\pmod{3}$. 
\end{prop}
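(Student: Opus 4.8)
The plan is to bound, for an arbitrary vertex $v$ of eccentricity $d$, the quantity $\sigma(v)=F(n_0(v),n_1(v),\ldots,n_d(v))=\sum_{i=0}^d i\,n_i(v)$, since $\rho(G)=\max_v\overline{\sigma}(v)=\frac{1}{n-1}\max_v\sigma(v)$. The whole argument then reduces to a constrained discrete optimization: maximize $F$ over all integer sequences $(n_0,\ldots,n_d)$ with $n_0=1$ and $\sum_i n_i=n$, subject to the structural constraints valid for the class in question. The constraints I would use are: (i) $n_i\geq\kappa$ for every intermediate level $1\leq i\leq d-1$, with $\kappa=5$ for triangulations and $\kappa=3$ for quadrangulations, because each such $N_i(v)$ separates the earlier levels from the later ones and is therefore a cutset of a $\kappa$-connected graph; and (ii) the tail constraints of Lemma~\ref{la:tail-of-distance-sequence}, namely $n_{d-1}=5\Rightarrow n_d=1$ in the triangulation case, and $n_{d-1}=3\Rightarrow n_d=1$ together with the exclusion of the tail pattern $(n_{d-2},n_{d-1},n_d)=(3,4,1)$ in the quadrangulation case.

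Dropping (ii), the maximization of $F$ subject only to (i) is exactly the problem solved by Favaron, Kouider and Mah\'eo, whose value is recorded in Proposition~\ref{prop:bound-per-conn}; its maximizer has all intermediate levels equal to $\kappa$ and a last level holding the remainder $r:=(n-1)\bmod\kappa$. First I would check, residue by residue, when this maximizer already respects (ii): precisely when $r\leq 1$, i.e.\ $n\equiv 2\pmod 5$ for triangulations and $n\equiv 2\pmod 3$ for quadrangulations. For these residues the claim is just Proposition~\ref{prop:bound-per-conn} rewritten, and I would only verify that dividing the stated value by $n-1$ yields $\frac{n+4}{10}+\varepsilon_n$ (resp.\ $\frac{n+2}{6}+\varepsilon_n$) with the indicated positive $\varepsilon_n$.

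For the remaining residues (those with $r\geq 2$, and the case $r=0$ where the all-$\kappa$ sequence has $n_{d-1}=n_d=\kappa$) the Favaron--Kouider--Mah\'eo maximizer violates (ii), so I would re-solve the optimization under both (i) and (ii). The clean tool is an exchange argument: if two intermediate levels both exceed $\kappa$, moving a vertex from the lower-indexed to the higher-indexed level strictly increases $F$; hence in an optimum all intermediate levels equal $\kappa$ except possibly one, and all ``excess'' sits near the tail. This reduces the problem to comparing a small fixed family of admissible tail shapes, split into the case $n_d=1$ and the case $n_d\geq 2$ (which by (ii) forces $n_{d-1}\geq\kappa+1$); in each case $F$ is an explicit quadratic in the number of levels $d$, which I maximize over its admissible range and then take the larger of the two cases. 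For $5$-connected triangulations with $n\equiv 0\pmod 5$ the optimum is the tail $(\ldots,5,6,3)$ rather than the naive ``cheap repair'' $(\ldots,5,8,1)$; for $3$-connected quadrangulations with $n\equiv 0\pmod 3$ it is $(\ldots,3,4,3,1)$. This last example is exactly where the second part of Lemma~\ref{la:tail-of-distance-sequence} is indispensable: it forbids the higher-$F$ tail $(3,4,1)$, forcing the loss of an extra unit of $F$ and producing the comparatively large correction $\varepsilon_n=-\frac{5}{3(n-1)}$.

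The main obstacle is precisely this tail bookkeeping; it is not conceptual but computational. The delicate points are (a) enumerating the admissible tail configurations allowed by both parts of Lemma~\ref{la:tail-of-distance-sequence} so that no higher-$F$ feasible sequence is overlooked, and (b) carrying the resulting quadratic optimizations through each residue class so that the constant term matches the stated $\varepsilon_n$ exactly. I would organize the write-up by residue, in each case exhibiting the optimal sequence, verifying its feasibility against (i) and (ii), and confirming that $F/(n-1)$ equals the asserted value; sharpness is then witnessed by the triangulations and quadrangulations in the figures, whose distinguished black vertex realizes exactly these optimal distance sequences.
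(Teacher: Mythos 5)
Your proposal is correct and follows essentially the same route as the paper: both reduce $\rho(G)$ to maximizing $F(n_0,\ldots,n_d)=\sum_i i\,n_i$ over distance-degree sequences with $n_i\ge\kappa$ at intermediate levels, use the same exchange argument to push all excess toward the tail, and invoke Lemma~\ref{la:tail-of-distance-sequence} to exclude the infeasible tails (including the $(3,4,1)$ tail in the quadrangulation case $n\equiv 0\pmod 3$). The only cosmetic difference is that the paper compares against the best and second-best sequences $X_{max}(n)$ and $X_{max}'(n)$ of the unconstrained problem and then rules them out, rather than re-solving the optimization with the tail constraints built in.
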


\begin{proof}

(a) It suffices to show that for an arbitrary vertex $v$ of $G$ we have
\[ \sigma(v) \leq \frac{n^2+3n}{10} + \varepsilon_n',  \]
where
$\varepsilon_n' = -10$ if $n\equiv 0 \pmod{5}$,
$\varepsilon_n' = -14$ if $n\equiv 1 \pmod{5}$,
$\varepsilon_n' = 0$ if $n\equiv 2 \pmod{5}$, and 
$\varepsilon_n' = -8$ if $n\equiv 3,4 \pmod{5}$. \\
Fix $v \in V(G)$ and let $d=e(v)$. Then 
\[ \sigma(v) = \sum_{i=0}^d in_i = F(n_0,n_1,\ldots,n_d).  \]
All $n_i$ are positive integers, $n_0=1$ and $\sum_{i=0}^d n_i=n$. 
Since $G$ is $5$-connected we also have 
$n_i\geq 5$ for all $i\in \{1,2,\ldots,d-1\}$. 
To bound $F(n_0, n_1,\ldots,n_d)$ from above we assume that $n$ is fixed, and that
$d' \in \mathbb{N}$ and $X_{max}(n)=(n_0',n_1',\ldots, n_{d'}')$ maximise the function
$F$ among all integers $d$ and sequences $X$ that satisfy these constraints. 
We first note that $n_1'=n_2' =\cdots=n_{d-1}'=5$. Indeed, if  
$n_i'> 5$ for some $i$ with $1 \leq i \leq d'-1$, 
then decreasing $n_i'$ by $1$ and increasing $n_{i+1}'$ by $1$ yields a 
new sequence $X'$ that satisfies the above constraints and for which 
$F(X')=F(X_{max}(n))+1$, contradicting the choice of $X_{max}(n)$. Also, if $n_{d'}'> 5$, then
decreasing $n_{d'}'$ by $1$, appending a new entry $n_{d'+1}'=1$ at
the end and increasing $d'$ by $1$ yields a sequence that satisfies the 
requirement but whose $F$-value is greater, again a contradiction to the
choice of $X_{max}(n)$. Therefore, if $q$ and $r$ are positive integers with $1 \leq r \leq 5$
such that $n-1=5q+r$, then the unique sequence maximising $F$ subject to the
above constraints is 
\[ X_{max}(n) = (1, 5, 5, \ldots,5,r), \] 
where the entry $5$ appears exactly $q$ times. If $r\neq 1$, then it is easy to see 
that the unique sequence with the second largest $F$-value satisfying the constraints is
the sequence 
\[ X_{max}'(n) = (1, 5, 5, \ldots,5, 6,r-1), \]  
where the entry $5$ appears exactly $q-1$ times. \\[1mm]
\textsc{ Case 1:} $n\equiv 2 \pmod{5}$. \\
Then $F(n_0,n_1,\ldots,n_d) \leq F(X_{max}(n)) = \frac{1}{10}(n^2+3n)$, as desired. \\[1mm]
\textsc{ Case 2:} $n \equiv 0,1,3,4 \pmod{5}$. \\ 
Then $(n_0,n_1,\ldots,n_d) \neq X_{max}(n)$ since otherwise, if 
$(n_0,n_1,\ldots,n_d) = X_{max}(n)$, then $n_{d-1}=5$ and $n_d=r\neq 1$, contradicting 
Lemma \ref{la:tail-of-distance-sequence}(a). Therefore, 
$F(n_0,n_1,\ldots,n_d) \leq F(X_{max}'(n))$, and a simple calculation shows
that $F(X_{max}')(n)$ is the claimed upper bound on $\sigma(v)$. \\
(b) The proof of (b) is analogous to that of (a), with only two differences: 
The condition $n_i\geq 5$ for all $i\in \{1,2,\ldots,d-1\}$ in (a) is 
replaced by $n_i\geq 3$ for all $i\in \{1,2,\ldots,d-1\}$. 
Also, Lemma \ref{la:tail-of-distance-sequence}(b) implies that 
for $n\equiv 1,2 \pmod{3}$ we have $(n_0,n_1,\ldots,n_d) \neq X_{max}(n)$ and so
$F(n_0,n_1,\ldots,n_d) \leq F(X_{max}(n)')$,  
while for $n\equiv 0 \pmod{3}$ Lemma \ref{la:tail-of-distance-sequence}(b)
implies that  $(n_0,n_1,\ldots,n_d) \neq X_{max}(n), X_{max}(n)'$ and thus
$F(n_0,n_1,\ldots,n_d) < F(X_{max}(n)')$. 
\end{proof}

\section{Upper Bounds on the Wiener Index of Triangulations and Quadrangulations}

In this section we present asymptotically sharp upper bounds on the Wiener index of simple
triangulations and simple quadrangulations, and improved bounds for simple $4$-connected
and $5$-connected triangulations as well as simple $3$-connected quadrangulations. 

In the statements and proofs of our results we use the following notation. 
If $S$ is a separating cycle of a plane graph $G$, then we denote the set
of vertices inside $S$ by $A$, and the set of vertices outside $S$ by $B$. 
We often use $S$ also for the set of vertices on this cycle, and we
further let $a:=|A|$, $b:=|B|$ and $s:=|S|$. The following separator theorem
by Miller is an important tool for the proof of our bounds.

\begin{theo} {\rm (\cite{Mil986})} \label{theo:separating-cycle}
If $G$ is a $2$-connected plane graph of order $n$ whose faces have length at most $\ell$, 
then $G$ has a separating cycle $S$ of length at most  
$2\sqrt{2\lfloor \ell/2\rfloor n}$, such that 
$a,b \leq \frac{2}{3}n$. 
\end{theo}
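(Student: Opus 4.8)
The plan is to produce the separating cycle as a \emph{fundamental cycle} of a breadth-first search (BFS) tree, chosen so that it is simultaneously short and weight-balanced; the face-length bound $\ell$ will control both quantities. Assign each vertex weight $1/n$, so that the total weight is $1$, and observe that it suffices to find a simple cycle $S$ enclosing weight in $[\tfrac13,\tfrac23]$: the enclosed vertices then form $A$ and the rest $B$, so $a,b\le \tfrac23 n$. First I would root a BFS tree $T$ at an arbitrary vertex and use the tree--cotree decomposition of the plane graph: the non-tree edges, viewed as dual edges, form a spanning tree $T^{\ast}$ of the dual $G^{\ast}$. Each non-tree edge $e$ closes a unique simple cycle $C_e$ with $T$, and rooting $T^{\ast}$ at the outer face gives, for each $e$, a well-defined set of faces, hence of vertices, enclosed by $C_e$. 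The candidate separators are exactly these cycles $C_e$, and since $G$ is $2$-connected each $C_e$ is a genuine simple cycle, so no degeneration occurs.

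For the balance I would order the non-tree edges by descending $T^{\ast}$ from its root. Passing from one dual-tree edge to an adjacent one changes the enclosed region by a single face, so the enclosed weight changes by at most the weight of the vertices on that face, that is by at most $\ell/n$, which tends to $0$. As the enclosed weight grows from near $0$ (at the outer face) to near $1$, a discrete intermediate value argument then yields a non-tree edge $e$ whose fundamental cycle $C_e$ encloses weight in $[\tfrac13,\tfrac23]$, giving the required split $a,b\le\tfrac23 n$.

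For the length, $C_e$ consists of the edge $e$ together with the two tree paths from its endpoints to their lowest common ancestor, so $|C_e|\le 2\,\mathrm{depth}(T)+1$. To make this useful I would first invoke the Lipton--Tarjan level-cutting idea to bound the depth: letting $j_1$ be the median BFS level, one chooses levels $j_0<j_1<j_2$ that are light, which exist within distance $O(\sqrt{n})$ of $j_1$ by pigeonhole since the level sizes sum to $n$. Contracting everything up to level $j_0$ into a single root and deleting everything below level $j_2$ leaves a plane graph of BFS depth $j_2-j_0=O(\sqrt{n})$, in which the fundamental-cycle argument above yields a balanced cycle of length $O(\sqrt{n})$. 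The face bound enters here through the observation that two vertices sharing a face differ in BFS level by at most $\lfloor \ell/2\rfloor$, since the shorter arc of the face boundary joins them; this is what converts face counts into the correct level and length estimates.

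The hard part will be twofold. First, one must assemble the level cuts and the fundamental cycle of the middle region into a \emph{single} simple separating cycle of the original graph, rather than a mere vertex separator, and re-verify that both sides still carry weight at most $\tfrac23$; cleanly handling the contracted apex and the boundary arcs is the delicate bookkeeping. Second, pinning down the exact constant $2\sqrt{2\lfloor \ell/2\rfloor n}$ requires optimizing the trade-off between the cost of the level cuts and the length of the fundamental cycle, and carrying the factor $\lfloor \ell/2\rfloor$ through the per-face estimates. For triangulations, where $\ell=3$ and $\lfloor \ell/2\rfloor=1$, this should recover the classical bound $2\sqrt{2n}$, which is a useful consistency check on the optimization.
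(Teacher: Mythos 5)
The paper does not prove this statement; it is quoted verbatim from Miller's paper \cite{Mil986}, so there is no internal proof to compare against, and your sketch has to be measured against Miller's original argument.

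Your outline is the Lipton--Tarjan paradigm (BFS levels, tree--cotree decomposition, fundamental cycles, level cutting), and most of its individual ingredients are sound: fundamental cycles of a spanning tree are always simple (this has nothing to do with $2$-connectivity, which is needed instead to make face boundaries cycles), the dual-tree walk does give a fundamental cycle whose enclosed weight lands in $[\tfrac13,\tfrac13+\ell/n]$, and the observation that vertices on a common face differ by at most $\lfloor \ell/2\rfloor$ in BFS level is exactly how the face bound enters Miller's constant. The genuine gap is the step you defer to ``delicate bookkeeping'': assembling the two light levels and the fundamental cycle of the contracted middle graph into a \emph{single simple cycle} of $G$. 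That is not bookkeeping --- it is the entire content of Miller's theorem over and above Lipton--Tarjan. The union of two BFS levels and a fundamental cycle is in general not a cycle, not even close to one; the fundamental cycle of the contracted graph passes through the contracted apex and does not lift to a simple closed curve in $G$; and there is no generic way to ``shortcut'' a vertex separator of size $O(\sqrt{\ell n})$ into a simple cycle of comparable length without losing the balance. Miller's proof does not patch the Lipton--Tarjan separator at all: it runs a breadth-first search by faces, works with the cycles bounding the levels of that search, and handles the unbalanced case by a separate region-growing argument inside a deepest level. Since the application in this paper (Theorem~\ref{theo:kpa-conn-tri}) genuinely needs $S$ to be a cycle --- the graph $F_s$ is glued into it --- the distinction between a vertex separator and a cycle separator cannot be waved away, and as written your argument only establishes the former. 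The secondary issue, extracting the exact constant $2\sqrt{2\lfloor \ell/2\rfloor n}$, is indeed just optimization once the cycle structure is in hand, but it cannot be carried out until the first gap is closed.
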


We now define a plane graph which will be used in the proof of the
main result of this section.

\begin{defi}  \label{defi:Fp}
For $p\in \mathbb{N}$ with $p\geq 3$ let $F_p$ be the 
plane graph constructed as follows. 
Let $C=u_0, u_1, \ldots,u_{p-1}, u_0$ be a cycle of length $p$.  
Inside $C$ we add  a cycle  $C'=v_0, v_1,\ldots, v_{2p-1}v_0$ of length 
$2p$ and edges $u_iv_{2i-1}, u_iv_{2i}$, $u_iv_{2i+1}$ for $i=0,1,\ldots,p-1$, 
with indices taken modulo $p$ for the $u_i$ and modulo $2p$ for the $v_i$.
Inside $C'$ we add a cycle  $C''= w_0, w_1,\ldots, w_{2p-1}, w_0$ of length $2p$ 
and edges 
$v_iw_i, v_iw_{i+1}$ for $i=0,1,\ldots,2p-1$, with all indices taken modulo $2p$.
Inside $C''$ we add a new vertex $z$ and join it
to every vertex of $C''$. The graph $F_4$ is shown in Figure \ref{fig:graph-H4}. \\
We define $F_p'$ to be a plane graph with the same vertex and edge set as
$F_p$, but with the cycle $C'$ outside the cycle $C$, the cycle $C''$ outside
the cycle $C'$, and $z$ lying in the unbounded face whose boundary is $C''$. 
\end{defi}

  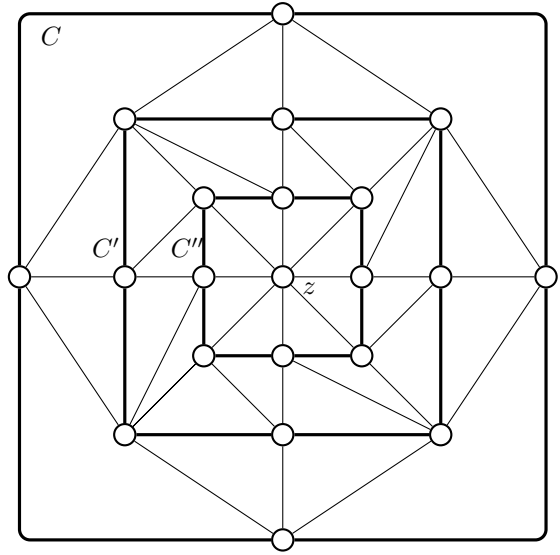
\begin{figure}[htbp]
  \begin{center}
\begin{tikzpicture}
  [scale=0.7,inner sep=1mm, %
   vertex/.style={circle,thick,draw}, %
   thickedge/.style={line width=2pt}] %
    \begin{scope}[>=triangle 45]
    
     \node[vertex] (a1) at (5,0) [fill=white] {};
     \node[vertex] (b1) at (2,2) [fill=white] {};
     \node[vertex] (b2) at (5,2) [fill=white] {};
     \node[vertex] (b3) at (8,2) [fill=white] {};
     \node[vertex] (c1) at (3.5,3.5) [fill=white] {};
     \node[vertex] (c2) at (5,3.5) [fill=white] {};
     \node[vertex] (c3) at (6.5,3.5) [fill=white] {};
     \node[vertex] (d1) at (0,5) [fill=white] {};
     \node[vertex] (d2) at (2,5) [fill=white] {};
     \node[vertex] (d3) at (3.5,5) [fill=white] {};          
     \node[vertex] (d4) at (5,5) [fill=white] {};
     \node[vertex] (d5) at (6.5,5) [fill=white] {};
     \node[vertex] (d6) at (8,5) [fill=white] {};
     \node[vertex] (d7) at (10,5) [fill=white] {};
     \node[vertex] (e1) at (3.5,6.5) [fill=white] {};
     \node[vertex] (e2) at (5,6.5) [fill=white] {};
     \node[vertex] (e3) at (6.5,6.5) [fill=white] {};
     \node[vertex] (f1) at (2,8) [fill=white] {};
     \node[vertex] (f2) at (5,8) [fill=white] {};
     \node[vertex] (f3) at (8,8) [fill=white] {};
     \node[vertex] (g1) at (5,10) [fill=white] {};
    \draw[very thick] (b1)--(b2)--(b3)--(d6)--(f3)--(f2)--(f1)--(d2)--(b1);  
    \draw[very thick] (c1)--(c2)--(c3)--(d5)--(e3)--(e2)--(e1)--(d3)--(c1);     
    \draw[black] (a1)--(b3)--(d7)--(f3)--(g1)--(f1)--(d1)--(b1)--(a1)
          (a1)--(b2)   (d7)--(d6)  (g1)--(f2)  (d1)--(d2);
    \draw[black]  (b1)--(c1)--(b2)--(c2)--(b3)--(c3)--(d6)--(d5)--(f3)
    (f3)--(e3)--(f2)--(e2)--(f1)--(e1)--(d2)--(d3)--(b1)--(c1);
    \draw[rounded corners, very thick] (a1)--(10,0)--(d7) 
       (d7)--(10,10)--(g1) (g1)--(0,10)--(d1)  (d1)--(0,0)--(a1);
    \draw[black] (c1)--(d4)--(c2)  (c3)--(d4)--(d5)  (e3)--(d4)--(e2)
        (e1)--(d4)--(d3);          
    \node [right, below] at (5.5,5.05) {$z$};    
    
    \node [below] at (0.6,9.9) {$C$}; 
    \node [below] at (1.65,5.9) {$C'$};    
    \node [below] at (3.2,5.9) {$C''$};

    \end{scope}
\end{tikzpicture}   
\caption{The graph $F_p$ for $p=4$. The $p$-cycle $C$ and $2p$-cycles $C'$ and $C''$ drawn with thick lines.}
\label{fig:graph-H4}
\end{center}
\end{figure}

\begin{la}   \label{la:Hp-is-5-c}
Let $F_p$ be the graph defined in Definition \ref{defi:Fp} above. \\
(a) $\kappa(F_p) \geq 5$ for $p\geq 3$. \\
(b) If $u \in V(F_p)$ and $M \subseteq V(C)$ with $|M| \leq 5$,
then $F_p$ contains a $(u,M)$-fan. \\
(c) If $M_1, M_2 \subseteq V(C)$ are two sets with $|M_1| = |M_2| \leq 5$, 
then $F_p$ contains a set of $|M_1|$ disjoint paths from $M_1$ to $M_2$. 
\end{la}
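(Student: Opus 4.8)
The plan is to analyze the graph $F_p$ directly from its explicit layered construction in Definition~\ref{defi:Fp}, exploiting the fact that its vertices fall into four concentric shells: the outer $p$-cycle $C$, the $2p$-cycle $C'$, the $2p$-cycle $C''$, and the central vertex $z$. I would first establish (a), that $\kappa(F_p)\geq 5$, by verifying that every vertex has degree at least $5$ and then arguing that no cutset of size at most $4$ can exist. A clean way to do this is to exhibit, between any two vertices, five internally disjoint paths via Menger's theorem; indeed, (b) and (c) are precisely the tools that let one do exactly that, so I would aim to prove (b) and (c) first and then derive (a) as a consequence. (A direct degree count shows $\deg z = 2p$, each $w_i$ and $v_i$ has degree at least $5$, and each $u_i$ has degree $5$, namely its two neighbors on $C$ plus the three spokes $v_{2i-1},v_{2i},v_{2i+1}$; so the minimum degree is exactly $5$, attained on $C$.)

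For part (b), given an arbitrary $u\in V(F_p)$ and a target set $M\subseteq V(C)$ with $|M|\leq 5$, the goal is to produce a $(u,M)$-fan. The natural strategy is to route $|M|$ internally disjoint paths outward through the shells. I would handle the cases by the shell containing $u$. If $u=z$, the five edges from $z$ to five chosen vertices of $C''$ start five disjoint paths, which I then continue independently outward through $C'$ to the $u_i$'s and along $C$ to reach $M$. If $u$ lies on one of the cycles, a symmetric routing works: send paths both ``clockwise'' and ``counterclockwise'' along the shells and radially outward, using the abundant spoke edges $u_iv_{2i-1},u_iv_{2i},u_iv_{2i+1}$ and $v_iw_i,v_iw_{i+1}$ to switch shells without collisions. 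The key structural fact making this feasible is that each $u_i$ has three independent downward connections and each shell is a cycle with $2p\geq 6$ vertices, giving enough room to keep at most five paths pairwise disjoint. Part (c) then follows either by the same explicit routing between two five-subsets of $C$ along the outer cycle (where vertex-disjoint paths between two small sets on a cycle are straightforward since $|V(C)|=p\geq 3$ and we may also dip into the inner shells to avoid congestion), or more cleanly by a Menger-type argument: add a new vertex adjacent to all of $M_1$ and another adjacent to all of $M_2$, apply the fan lemma twice, and splice.

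The main obstacle I anticipate is the bookkeeping in part (b) when $|M|=5$ but $p$ is small, particularly $p=3$, where $C$ has only three vertices, so $|M|\leq 3$ forces no real constraint, but the shells $C'$ and $C''$ have length $2p=6$ and the paths must weave through a tight neighborhood without reusing vertices. Ensuring the five paths remain genuinely internally disjoint as they funnel from the many inner vertices onto the few vertices of $C$ requires care: one must check that distinct paths do not collide at the $u_i$'s or along the short arcs of $C$. I expect the cleanest route is to reduce (a) and (c) to (b) via standard fan/Menger arguments, and to prove (b) by an explicit but case-organized construction of the outward routes, verifying disjointness shell by shell. A useful simplification is to prove (b) for $u$ lying in the innermost position (at or near $z$) first, since the worst congestion occurs where all five paths originate close together, and then observe that for $u$ in an outer shell the paths have even more room.
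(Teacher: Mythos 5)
Your plan has the logical dependency backwards, and this creates a genuine gap. You propose to prove (b) and (c) first by explicit routing and then ``derive (a) as a consequence,'' but (b) and (c) only concern paths ending in subsets of $V(C)$, and such information does not control the separation of two vertices lying in the inner shells. Concretely, to get five internally disjoint $(u,v)$-paths for, say, $u=z$ and $v=w_0$, splicing a $(u,M)$-fan with a $(v,M)$-fan at $M\subseteq V(C)$ fails, because the two fans may intersect each other at interior vertices of $C'$, $C''$ or at $z$; fans do not compose into internally disjoint paths. Worse, for $p\in\{3,4\}$ (which the definition allows and the application to short separating cycles actually requires) one cannot even choose $|M|=5$ inside $V(C)$, so a cutset $S$ with $|S|\le 4$ could contain all of $V(C)$ and statement (b) would say nothing about connectivity of what remains. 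Your fallback for (c) --- adding apex vertices over $M_1$ and $M_2$ and invoking Menger --- likewise presupposes that the augmented graph is $|M_1|$-connected, i.e.\ it presupposes (a). So as structured, the argument is circular or incomplete precisely at the step you rely on most.

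The paper's proof runs in the opposite (and correct) direction: it verifies (a) directly --- any two vertices of $F_p$ are joined by five internally disjoint paths, which one checks by the kind of shell-by-shell routing you describe, but for \emph{arbitrary} pairs of vertices, including pairs deep inside --- and then obtains (b) and (c) immediately as the standard Menger-type corollaries of $5$-connectivity (the fan lemma for $|M|\le\kappa$, and the existence of $|M_1|$ disjoint paths between equal-sized sets of size at most $\kappa$). Your constructive routing idea and your degree computation are sound and would serve perfectly well as the verification of (a); the fix is simply to aim that explicit construction at (a) itself rather than at (b), and then quote the standard consequences of $5$-connectedness for (b) and (c).
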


\begin{proof}
(a) It is easy to verify that any two vertices of $F_p$ are joined by 
five internally disjoint paths, hence $F_p$ is $5$-connected.  \\
(b) and (c) follow directly from $F_p$ being $5$-connected. 
\end{proof}

\begin{theo}   \label{theo:kpa-conn-tri}
Let $\kappa\in \{3,4,5\}$. 
Then there exists a constant $D$ such that 
\[ W(G) \leq \frac{1}{6\kappa}n^3 + D n^{5/2} \] 
for every $\kappa$-connected simple triangulation of order $n$.
\end{theo}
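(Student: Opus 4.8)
The plan is to reduce to bounding $\sum_{v}\sigma(v)=2W(G)$ and to prove the bound by induction on $n$ using a balanced separating cycle. The first thing to note is that Proposition~\ref{prop:bound-per-conn} alone does not suffice: since every interior distance layer $N_i(v)$ of a $\kappa$-connected graph has at least $\kappa$ vertices, it gives $\sigma(v)\le\frac{n^2}{2\kappa}+O(n)$, and summing over all $v$ only yields $W(G)\le\frac{n^3}{4\kappa}+O(n^2)$, too large by a factor of $\tfrac32$. The missing factor reflects that a ``central'' vertex sees the graph grow on two sides, and recovering it is exactly the job of the separator recursion.

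First I would apply Miller's Theorem~\ref{theo:separating-cycle} (with $\ell=3$) to get a separating cycle $S$ with $s=|S|\le 2\sqrt{2n}$ and $a,b\le\tfrac23 n$. Writing $\delta(x)=d_G(x,S)$, I decompose $2W(G)=\sum_{u,w}d_G(u,w)$ (ordered pairs) according to whether both endpoints lie in $A$, both in $B$, one in each, or at least one in $S$. The last group has at most $2sn=O(n^{3/2})$ pairs, each of distance $<n$, hence contributes $O(n^{5/2})$. For a crossing pair $u\in A$, $w\in B$ every path meets $S$, so $\delta(u)+\delta(w)\le d_G(u,w)\le\delta(u)+\delta(w)+s/2$, the upper bound obtained by routing along the cycle $S$; the slack $s/2=O(\sqrt n)$ over $O(n^2)$ pairs again costs only $O(n^{5/2})$. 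Thus the crossing group contributes $2b\sum_{u\in A}\delta(u)+2a\sum_{w\in B}\delta(w)+O(n^{5/2})$ to $2W(G)$. Since each level $M_j=\{x\in A:\delta(x)=j\}$ is, for interior $j$, a cutset of $G$ and so has at least $\kappa$ vertices, the optimisation behind Proposition~\ref{prop:bound-per-conn} gives $\sum_{u\in A}\delta(u)\le\frac{a^2}{2\kappa}+O(a)$, and likewise for $B$; hence the crossing group contributes at most $\frac{ab(a+b)}{\kappa}+O(n^{5/2})$.

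For the within-$A$ and within-$B$ groups I recurse. Since $S$ separates, $d_G(u,w)=d_{G[A\cup S]}(u,w)$ for $u,w\in A$, and (after the capping described below) the inductive estimate $2W(H)\le\frac{n(H)^3}{3\kappa}+2D\,n(H)^{5/2}$ applies to each side. The cubic parts then collapse: $\frac{a^3+b^3}{3\kappa}+\frac{ab(a+b)}{\kappa}=\frac{(a+b)^3}{3\kappa}\le\frac{n^3}{3\kappa}$, regardless of the balance. The balance condition $a,b\le\tfrac23 n$ enters only in the error terms: since $(\tfrac23)^{5/2}+(\tfrac13)^{5/2}<1$, the two recursive $n^{5/2}$-contributions sum to a definite fraction of $2Dn^{5/2}$, leaving slack to absorb the additive $O(n^{5/2})$ and close the induction once $D$ is large.

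The main obstacle is that $G[A\cup S]$ is only a triangulated disc and need not be $\kappa$-connected, so the inductive hypothesis cannot be applied to it with the same constant $\tfrac{1}{6\kappa}$. I would repair this by capping its outer face. For $\kappa=3$ a single apex joined to all of $S$ restores a simple (hence $3$-connected) triangulation. For $\kappa=5$ this is precisely the purpose of $F_p'$ from Definition~\ref{defi:Fp}: gluing $F_p'$ with $p=s$ onto $S$ produces a simple triangulation on $a+O(\sqrt n)$ vertices, and the fan property in Lemma~\ref{la:Hp-is-5-c} certifies that it is again $5$-connected; the case $\kappa=4$ uses an analogous, simpler cap. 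Since each cap has $O(\sqrt n)$ vertices, a within-$A$ shortest path in the capped graph can be turned into a walk in $G[A\cup S]$ by replacing the portion between its first entry into and last exit from the cap by an arc of $S$ of length at most $s/2$; this distorts the within-$A$ distance by only $O(\sqrt n)$, so the within-$A$ sum changes by $O(n^{5/2})$ and the recursion survives. Verifying that the capped graph is genuinely $\kappa$-connected for $\kappa\in\{4,5\}$, and controlling this distance distortion, is the technical heart of the argument.
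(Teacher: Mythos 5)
Your proposal is correct and follows essentially the same route as the paper's proof: Miller's separator theorem (Theorem~\ref{theo:separating-cycle}), capping each side of the separating cycle with the graph $F_s$ of Definition~\ref{defi:Fp} to restore $\kappa$-connectivity (certified via Lemma~\ref{la:Hp-is-5-c}), induction on the two capped triangulations, the cubic collapse $a^3+b^3+3a^2b+3ab^2=(a+b)^3$, and the convexity estimate $(2/3)^{5/2}+(1/3)^{5/2}<1$ to close the induction on the $n^{5/2}$ term. The only cosmetic difference is that you bound the crossing pairs directly by $d_G(u,w)\le d_G(u,S)+d_G(w,S)+s/2$ together with a layer-counting estimate for $\sum_{u\in A}d_G(u,S)$, whereas the paper routes each crossing pair through the apex $z$ of the cap and invokes the remoteness bounds of Corollary~\ref{coro:remote} and Proposition~\ref{prop:remote} for $\sigma(z,H)$ and $\sigma(z,K)$; the two computations are equivalent, both resting on the fact that every interior distance layer is a cutset of size at least $\kappa$.
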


\begin{proof}
Our proof is by induction on $n$. Define $D:=\max\{D_1,D_2\}$, where
$D_1$ is the smallest real $x$ for which the inequality 
$ W(G) \leq \frac{1}{6\kappa}n^3 + x n^{5/2}$ holds for all 
$\kappa$-connected simple 
triangulations $G$ of order at most $10^4$, and $D_2$ is the smallest real
$x$ for which 
$8.1 + 0.76 x \leq x$ holds. 
We prove by induction on $n$ that for all simple triangulations $G$
of order $n$, 
\begin{equation} \label{eq:induction-st} 
W(G) \leq \frac{1}{6\kappa}n^3 + D n^{5/2}, 
\end{equation} 
Now \eqref{eq:induction-st} holds for all $n\leq 10^4$ by the 
choice of $D$. Let $n>10^4$. By our induction hypothesis we may assume 
that \eqref{eq:induction-st} holds for all graphs of order
less than $n$. 
 
Since $G$ is $2$-connected, it follows by Theorem \ref{theo:separating-cycle} 
that $G$ contains a separating cycle $S=t_0t_1\ldots t_{s-1}t_0$ 
with $a,b \leq \frac{2}{3}n$,  
where $A, B, a, b, s$ are as in Theorem \ref{theo:separating-cycle} and above it.
Let $H$ be the simple triangulation obtained from the plane graph $G-A$ 
as follows. We first delete all edges between non-consecutive vertices of $S$ that 
run inside the cycle $S$. 
Inside $S$ we insert the graph $F_s$ by identifying the cycles $S$ and $C$, specifically  $t_i \in S$  %
with $u_i \in V(F_s)$ for $i=0,1,\ldots,s-1$. Clearly, $H$ is a simple 
triangulation of order $b+5s+1$. 
Similarly let $K$ be the simple 
triangulation of order $a+5s+1$ obtained from the 
plane graph $G-B$ by deleting all edges between non-consecutive vertices of
$S$ that run outside the cycle $S$ and inserting $F_s'$  (a copy of $F_s$) into the 
unbounded face, bounded by the vertices of $S$, by identifying $t_i \in S$ with
$u_i \in V(F_s')$ for $i=0,1,\ldots,s-1$.

For an illustration, see Figure~\ref{fig:graph-H4}.  
We claim that 
\begin{equation}  \label{eq:H-K-kappa-conn} 
\textrm{$H$ and $K$ are $\kappa$-connected.}
\end{equation}
We prove \eqref{eq:H-K-kappa-conn} only for $H$, the proof for
$K$ is analogous. Let $u,v$ be two arbitrary vertices of $H$. It suffices
to show that there
exist $\kappa$ internally disjoint $(u,v)$-paths in $H$. First assume that 
both, $u$ and $v$, are in  $V(F_s)$, then it follows from 
Lemma \ref{la:Hp-is-5-c}(a) and $\kappa \leq 5$ that there are
$\kappa$ internally disjoint $(u,v)$-paths in $F_s$, and thus in $H$.  
Now assume that exactly one of the two vertices, say $u$, is in $V(F_s)$.
Fix a vertex $w \in A$. 
It follows from the $\kappa$-connectedness of $G$ that in $G$ there exist
$\kappa$ internally disjoint $(w,v)$-paths $P_1,P_2,\ldots,P_{\kappa}$. 
For $i=1,2,\ldots,\kappa$ let $w_i$ be the last vertex of $P_i$ on $C$, 
and let $P_i'$ be the $(w_i,v)$-section of $P_i$. By 
Lemma \ref{la:Hp-is-5-c}(b), $F_s$ contains
a $(u,\{w_1,\ldots,w_{\kappa}\})$-fan $F$. Then $F$ together with $P_1',\ldots,P_{\kappa}'$ yields a collection of $\kappa$ internally disjoint
$(u,v)$-paths in $H$.  
Finally assume that both, $u$ and $v$, are not in $V(F_s)$. 
Then it follows from the $\kappa$-connectedness of $G$ that there exists
internally disjoint $(u,v)$-paths $P_1,P_2,\ldots,P_{\kappa}$ in $G$. 
If none of these contains a vertex in $V(F_s)$, then $P_1,P_2,\ldots,P_{\kappa}$ 
form a collection of $\kappa$ internally disjoint $(u,v)$-paths in $H$. 
If some of the paths, 
$P_1,\ldots,P_k$ say, contain a vertex of $V(F_s)$, then let 
$a_i$ and $a_i'$ be the first and last vertex, respectively, of 
$P_i$ in  $V(F_s)$. Let $M=\{a_1,\ldots,a_k\}$ and $M'=\{a_1',\ldots,a_k'\}$.
By Lemma \ref{la:Hp-is-5-c}(c), $F_s$ contains $k$ disjoint paths
$Q_1,\ldots,Q_k$ from $M$ to $M'$. Then the $(u,a_i)$-sections and the $(a_i',v)$-sections
of the paths $P_i$ together with $Q_1,\ldots,Q_k$ and the paths
$P_{k+1},\ldots,P_{\kappa}$ form a collection of $\kappa$ 
internally disjoint $(u,v)$-paths in $H$. This proves 
\eqref{eq:H-K-kappa-conn}.

The two graphs $H$ and $K$ have exactly the vertices in $S$ in
common. We now bound the Wiener index of $G$ in terms of the Wiener indices 
of $H$ and $K$, and the total distance of $z$ in $H$ and $K$.   
\begin{eqnarray}
W(G) & <  & \sum_{ \{x,y\} \subseteq B \cup S} d_G(x,y) 
            +  \sum_{ \{x,y\} \subseteq A \cup S} d_G(x,y) 
            +  \sum_{ x\in A, \ y\in B} d_G(x,y)  \nonumber \\
     & < & {n \choose 2} \frac{s}{2} 
         + \hspace*{-0.8em} \sum_{ \{x,y\} \subseteq B \cup V(F_s)} \hspace*{-0.8em}  d_H(x,y) 
             + \hspace*{-0.8em}  \sum_{ \{x,y\} \subseteq A \cup V(F_s)} \hspace*{-0.8em}  d_K(x,y) 
            +  \hspace*{-0.8em}  \sum_{ x\in A, \ y\in B}  \hspace*{-0.8em} \big( d_H(x,z) + d_K(z,y) \big).   \label{eq:W-total}
\end{eqnarray}
Indeed, for any two vertices $x$ and $y$ of $G$ that are both in $B \cup S$, 
we have $d_G(x,y) \leq d_H(x,y)+\frac{s}{2}$ since a shortest $(x,y)$-path in
$H$ either contains only vertices in $B\cup S$, in which case it is also
a path in $G$, or it contains vertices in $V(F_s)-S$, in which case 
replacing the segment between the first and last occurrence of a vertex in $V(F_s)-S$ in the 
path by a segment of the
cycle $S$ that contains at most $s/2$ vertices yields an $(x,y)$-path in $G$. 
Similarly, if $x$ and $y$ are both in $A \cup S$, then $d_G(x,y) \leq d_K(x,y)+\frac{s}{2}$. 
Finally, if $x \in A$ and $y\in B$, then we can obtain an $(x,y)$-path in $G$ from
the concatenation of an $(x,z)$-path in $K$ and a $(z,y)$-path in $H$ by
replacing $z$ with a segment of $S$ containing at most $s/2$ vertices. This 
proves \eqref{eq:W-total}. 

We now bound each of the terms in \eqref{eq:W-total}. Since $H$ and $K$ are 
$\kappa$-connected simple triangulations  
of order $b+5s+1$ and $a+5s+1$, respectively, we have by induction 
\begin{equation}    \label{eq:W(H)} 
\sum_{ \{x,y\} \subseteq B \cup V(F_s)} d_H(x,y)  =   W(H) 
    \leq \frac{1}{6\kappa} (b+5s+1)^3 +D(b+5s+1)^{5/2}, 
\end{equation}
and
\begin{equation}    \label{eq:W(K)} 
\sum_{ \{x,y\} \subseteq A \cup V(F_s)} d_K(x,y)  =   W(K) 
    \leq \frac{1}{6\kappa} (a+5s+1)^3 +D(a+5s+1)^{5/2}.
\end{equation}
It follows from the bounds on remoteness in Corollary \ref{coro:remote}(a),(b) 
and Proposition \ref{prop:remote} 
that $\sigma(v) \leq \frac{1}{2\kappa}n^2 + \frac{\kappa-2}{2\kappa}n
   + \frac{\kappa-3}{2\kappa}$ 
for every vertex $v$ of a $\kappa$-connected triangulation of order $n$. Hence 
$\sigma(z,H) \leq \frac{1}{2\kappa}(a+5s+1)^2 + \frac{\kappa-2}{2\kappa}(a+5s+1)
      + \frac{\kappa-3}{2\kappa}$ and 
$\sigma(z,K) \leq \frac{1}{2\kappa}(b+5s+1)^2 + \frac{\kappa-2}{2\kappa}(b+5s+1)
      + \frac{\kappa-3}{2\kappa}$. Hence 
\begin{eqnarray*}
\sum_{ x\in A, \ y\in B}  \big( d_H(x,z) + d_K(z,y) \big)
& = & b\, \sum_{x\in A} d_H(x,z) + a\, \sum_{y\in B} d_K(z,y)  \\
 & \hspace*{-20em}< & \hspace*{-10em} b\, \sigma(z,H) + a\, \sigma(z,K)  \\
& \hspace*{-20em}\leq & \hspace*{-10em}
      \frac{b}{2\kappa} \big[ (a+5s+1)^2 + (\kappa-2)(a+5s+1) + \kappa-3 \big]  \\
& \hspace*{-20em} & \hspace*{-10em}      
        + \frac{a}{2\kappa}\big[(b+5s+1)^2 + (\kappa-2)(b+5s+1) + \kappa-3 \big], 
\end{eqnarray*}
and since $a<a+5s+1$ and $b<b+5s+1$, 
\begin{eqnarray}
\sum_{ x\in A, \ y\in B}   d_H(x,z) + d_K(z,y) 
  &<& \frac{1}{2\kappa}(a+5s+1)^2(b+5s+1) + \frac{1}{2\kappa}(a+5s+1)(b+5s+1)^2 \nonumber \\
      & &   + \frac{\kappa-2}{\kappa}(a+5s+1)(b+5s+1)
             + \frac{\kappa-3}{2\kappa}(a+b). \label{eq:distances-A-and-B} 
\end{eqnarray}  
Hence we obtain from \eqref{eq:W-total}, \eqref{eq:W(H)}, \eqref{eq:W(K)}.
and \eqref{eq:distances-A-and-B}, 
\begin{eqnarray}
W(G) & < & \frac{1}{6\kappa} (a+5s+1)^3 +D(a+5s+1)^{5/2} 
     + \frac{1}{6\kappa} (b+5s+1)^3   + D(b+5s+1)^{5/2}  \nonumber \\
   & &   +
     \frac{1}{2\kappa}(a+5s+1)^2(b+5s+1) + \frac{1}{2\kappa}(a+5s+1)(b+5s+1)^2
      + {n \choose 2} \frac{s}{2}  \nonumber \\
   & &    + \frac{\kappa-2}{\kappa}(a+5s+1)(b+5s+1)
             + \frac{\kappa-3}{2\kappa}(a+b) \nonumber \\
   & = & \frac{1}{6\kappa} (a+b+10s+2)^3 + D \big[ (a+5s+1)^{5/2} + (b+5s+1)^{5/2}\big] 
        \nonumber \\
   & &    + \frac{\kappa-2}{\kappa}(a+5s+1)(b+5s+1)
             + \frac{\kappa-3}{2\kappa}(a+b) + {n \choose 2} \frac{s}{2}.  \label{eq:W-total-2} 
\end{eqnarray}
We bound the terms of the right hand side of \eqref{eq:W-total-2} separately.
We make use of the facts that $a+b+s=n$, and that by 
Theorem \ref{theo:separating-cycle} in conjunction with $n>10^4$ we have 
$s \leq 2^{3/2}n^{1/2} < 0.03n-1$. 
We bound the first term of  \eqref{eq:W-total-2} by 
$(a+b+10s+2)^3 = (n+9s+2)^3 \leq (n+ 9\cdot 2^{3/2}n^{1/2} + 2)^2$.
To bound the second term note that the real function $f(x)=x^{5/2}$ is 
concave up and that $a,b \leq \frac{2}{3}n$ by 
Theorem \ref{theo:separating-cycle}, which implies that 
$(a+5s+1)^{5/2} + (b+5s+1)^{5/2}$ is maximised if $a=\frac{2}{3}n$ 
and $b= \frac{1}{3}n -s$ (or vice versa). Therefore,  
$(a+5s+1)^{5/2} + (b+5s+1)^{5/2} 
    \leq (\frac{2}{3}n+5s+1)^{5/2} + (\frac{1}{3}n+4s+1)^{5/2}
   \leq  (\frac{2}{3}n+0.15n)^{5/2} + (\frac{1}{3}n+0.12n)^{5/2}
     = \big( (\frac{2}{3}+0.15)^{5/2} + (\frac{1}{3} + 0.12)^{5/2} \big)n^{5/2}
     <0.76n^{5/2}$. 
To bound the third term note that $\frac{\kappa-2}{\kappa}<1$, 
$a+5s+1<n$ and $b+5s+1<n$, so $\frac{\kappa-2}{\kappa}(a+5s+1)(b+5s+1)<n^2$.   
To bound the fourth term note that $\frac{\kappa-3}{2\kappa}<1$ and
$a+b<n$, so $\frac{\kappa-3}{2\kappa}(a+b)<n$. 
Finally,  ${n \choose 2} < \frac{1}{2}n^2$, and so we bound the fifth term
by  ${n \choose 2} \frac{s}{2} < 2^{-1/2}n^{5/2}$.  
In total we obtain from 
\eqref{eq:W-total-2},
\begin{eqnarray*}
W(G)  &<&  \frac{1}{6\kappa} (n+ 9\cdot 2^{3/2}n^{1/2} + 2)^3 
           + 0.76\; D n^{5/2}  + n^2 + n +2^{-1/2}n^{5/2} \\
    &=& \frac{1}{6\kappa}n^3 
     + \big( \frac{13}{\kappa} + 0.76\, D + 1 + 2^{-1/2} \big) n^{5/2} 
     + \frac{338}{\kappa} n^2 + \frac{8788}{3\kappa}n^{3/2}.
\end{eqnarray*}

Since $n\geq 10^4$, we have 
$\frac{338}{\kappa}n^2  + \frac{8788}{3\kappa}n^{3/2}  < 2n^{5/2}$.
Also, $ \frac{13}{\kappa} + 0.76\, D + 1 + 2^{-1/2} < 6.1 + 0.76\, D$, 
and so
\begin{eqnarray*}
W(G) &<& \frac{1}{6\kappa}n^3  + \big(8.1 + 0.76\, D\big)n^{5/2} \\
    & \leq & \frac{1}{6\kappa}n^3  + D n^{5/2} 
\end{eqnarray*}
since $D$ satisfies $8.1 + 0.76\, D \leq D$. The theorem follows. 
\end{proof}

The following bound on the Wiener index of simple 
quadrangulations is proved in a similar way. The only difference is that
a slightly modified version $Q_p$ of the plane graph $F_p$ 
is used in the proof. For an even $p$ with $p\geq 4$ let $Q_p$ be
the plane graph obtained from a cycle 
$C=u_0, u_1,\ldots,u_{p-1},u_0$ of length $p$, inside which
we add a cycle $C'=v_0, v_1,\ldots,v_{p-1},v_0$ of length $p$ 
and edges $u_iv_i$ for $i=0,1,\ldots,p-1$, inside which we 
add a vertex $z$ and joint it to all $v_i$ with $i$ even. 
It is easy to verify that  a 3-connected quadrangualation with the insertion of $Q_p$ stays $3$-connected. 
Apart from this difference, the proof of Theorem
\ref{theo:kappa-conn-quad} follows closely that 
of Theorem \ref{theo:kpa-conn-tri}, hence
we omit the proof. 

\begin{theo}  \label{theo:kappa-conn-quad}
Let $\kappa \in \{2,3\}$. 
Then there exists a constant $C$ such that \\
\[ W(G) \leq \frac{1}{6\kappa}n^3 + Cn^{5/2} \] 
for every $\kappa$-connected simple quadrangulation $G$ of order $n$. 
\hfill $\Box$
\end{theo}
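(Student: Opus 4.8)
The plan is to mirror the inductive argument of Theorem~\ref{theo:kpa-conn-tri} almost verbatim, substituting the quadrangulation-specific gadget $Q_p$ for $F_p$ and keeping the same bookkeeping. First I would set up the induction on $n$, defining the constant $C$ exactly as $D$ was defined: let $C_1$ be the smallest $x$ making $W(G) \le \frac{1}{6\kappa}n^3 + x\,n^{5/2}$ hold for all $\kappa$-connected simple quadrangulations of order at most some fixed threshold, and let $C_2$ solve a linear inequality of the shape $c_0 + 0.76\,x \le x$ (with $c_0$ a harmless absolute constant coming from the lower-order error terms); then set $C := \max\{C_1, C_2\}$. The base case is immediate by the choice of $C_1$, so I would fix $n$ large and assume the bound for all smaller orders.

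Next I would invoke Miller's separator theorem (Theorem~\ref{theo:separating-cycle}), noting that a simple quadrangulation has all faces of length $\ell = 4$, so it furnishes a separating cycle $S$ of length $s \le 2\sqrt{2\cdot 2\cdot n} = 4\sqrt{n}$ with $a,b \le \tfrac23 n$; since $s$ is even (the graph is bipartite, so every cycle has even length), the gadget $Q_s$ with its required even parameter $p=s$ is well-defined. I would then form $H$ and $K$ by deleting $A$ (resp.\ $B$), cleaning up chords of $S$ running into the deleted side, and gluing in $Q_s$ (resp.\ a reflected copy) along $S \equiv C$. The crucial structural claim is that $H$ and $K$ remain $\kappa$-connected simple quadrangulations; here I would lean on the remark preceding the statement, that inserting $Q_p$ into a $3$-connected quadrangulation preserves $3$-connectivity, together with an analogue of Lemma~\ref{la:Hp-is-5-c}(b),(c) providing the requisite fans and disjoint path systems inside $Q_s$. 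The quadrangulation orders work out to $n(H) = b + c_Q\, s + 1$ and $n(K) = a + c_Q\, s + 1$ for the appropriate constant $c_Q$ counting the extra vertices of $Q_s$ (each $Q_p$ adds $p+1$ vertices beyond $C$), and the distance-transfer inequalities $d_G(x,y) \le d_H(x,y) + s/2$ etc.\ go through unchanged because any detour through $Q_s$ can be rerouted along at most $s/2$ vertices of $S$.

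With $H,K$ valid I would plug the induction hypothesis into $W(H), W(K)$ and bound $\sigma(z,H), \sigma(z,K)$ using the remoteness bounds of Corollary~\ref{coro:remote}(c) and Proposition~\ref{prop:remote}(b), which give $\sigma(v) \le \frac{1}{2\kappa}n^2 + O(n)$ for $\kappa$-connected quadrangulations — exactly the form used in the triangulation proof. The decomposition $W(G) < \binom{n}{2}\tfrac{s}{2} + W(H) + W(K) + \sum_{x\in A, y\in B}(d_H(x,z)+d_K(z,y))$ recombines, via the identity $(a + c_Q s + 1)^3 + (b + c_Q s + 1)^3 + 3(\cdots)^2(\cdots) + 3(\cdots)(\cdots)^2 = (a+b+2c_Q s + 2)^3$, into a leading term $\frac{1}{6\kappa}(n + O(\sqrt n))^3 = \frac{1}{6\kappa}n^3 + O(n^{5/2})$; the $\binom{n}{2}\tfrac{s}{2}$ term contributes $O(n^{5/2})$, and convexity of $x^{5/2}$ together with $a,b\le\tfrac23 n$ bounds $(a+c_Q s+1)^{5/2}+(b+c_Q s+1)^{5/2}$ by a factor strictly below $1$ (numerically around $0.76$) times $n^{5/2}$. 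Collecting everything yields $W(G) < \frac{1}{6\kappa}n^3 + (c_0 + 0.76\,C)n^{5/2}$, and the defining inequality for $C_2$ closes the induction.

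The main obstacle I anticipate is \emph{not} the analysis — that is essentially a transcription — but the connectivity verification, i.e.\ establishing the $Q_p$-analogue of Lemma~\ref{la:Hp-is-5-c}. Because $Q_p$ is sparser than $F_p$ (the apex $z$ attaches only to the even-indexed $v_i$, to respect bipartiteness), it is merely $3$-connected rather than $5$-connected, so the fan and disjoint-paths lemmas must be proved at the tighter threshold $\kappa \le 3$ and one must check carefully that the three required internally disjoint paths genuinely exist through the bipartite gadget; any sloppiness there could break the key claim \eqref{eq:H-K-kappa-conn}. A secondary nuisance is ensuring $s$ is even so that $Q_s$ is defined at all, and tracking the slightly different additive constant $c_Q$ in the orders of $H$ and $K$ so that the cubic identity recombining the leading terms remains exact; these are routine once noted but must be stated explicitly.
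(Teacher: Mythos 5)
Your proposal matches the paper's approach exactly: the paper itself omits the full proof, stating only that one substitutes the gadget $Q_p$ (defined for even $p$, with the apex joined to the even-indexed $v_i$) for $F_p$ and otherwise follows the proof of Theorem~\ref{theo:kpa-conn-tri}, which is precisely your plan. Your bookkeeping (orders $b+2s+1$ and $a+2s+1$ for $H$ and $K$, evenness of $s$ from bipartiteness, the remoteness bounds from Corollary~\ref{coro:remote}(c) and Proposition~\ref{prop:remote}(b)) and your identification of the $\kappa$-connectivity verification for the $Q_p$-insertion as the one genuinely new ingredient are consistent with what the paper asserts.
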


The leading coefficients in the bounds in Theorems \ref{theo:kpa-conn-tri}
and \ref{theo:kappa-conn-quad} are optimal. This is shown by the graphs in 
Figures \ref{fig:t0}, \ref{fig:t1} and \ref{fig:t2} for $3$-connected triangulations, 
in Figures \ref{fig:t4c2}, \ref{fig:t4c3}, \ref{fig:t4c0} and \ref{fig:t4c1} for
$4$-connected triangulations, 
in Figures \ref{fig:t5c2}, \ref{fig:t5c3wi}, \ref{fig:t5c4}, \ref{fig:t5c0} and 
\ref{fig:t5c1} for $5$-connected triangulations, 
 in 
Figures \ref{fig:q0} and \ref{fig:q1} for $2$-connected quadrangulations, 
and in 
Figures \ref{fig:q3c2}, \ref{fig:q3c0} and \ref{fig:q3c1} for $3$-connected quadrangulations,

\section{Computational Results and Conjectures} \label{comput}

This section contains numerous figures and tables summarizing months of computer searches. None of this would have been possible without the help provided by Plantri, a program that generates triangulations and quadrangulation on numerous surfaces. For each category of problem (triangulations, 4-connected triangulations, 5-connected triangulations, quadrangulations and 3-connected quadrangulations) there is a table, which summarizes the largest Wiener index and  remoteness found for a given order in that category, along with ``Count", telling how many graphs attain the optimal value. Note that remoteness  in this section is \textit{ not normalized} to keep 
the calculations in the domain of integers. In other words, in the Tables we show  $(n-1){\rho (G)}$ under the
name of ``Remoteness".
Our Wiener index findings match those of \cite{CheCol2019} for triangulations. The number of isomorphism classes that our code searched matches the numbers  in \cite{BowFis1967}, \cite{BriGGMTW2005}, \cite{BriMck2005}, \cite{Lut2008}, \cite{SchSchSte2018}, verifying that
the  values that the search provides are in fact maximal. In each figure below, purple edges represent the repeating pattern and 
the black node marks a vertex which maximizes the remoteness. The computational evidence suggests that for sufficiently large order,
the maximum Wiener index is uniquely realized in every category, while remoteness is not, except for quadrangulations. 

\subsection{Computational Results for Triangulations}
Although the results of \cite{ervinarxive} made the Wiener index rows of  Table~\ref{tab:tsummary} obsolete for $n\geq 9$, we still include it to show the multiplicity
of maximizers up to $n=8$ and the remoteness results.

\FloatBarrier

\begin{figure}[htbp] 
	\begin{center}

		\caption{A summary of the largest Wiener Index and remoteness among all 3-connected quadrangulations on $ n $ vertices, and a count for how many isomorphism classes attain this value.}
        \label{tab:q3csummary}
		\end{center}
\end{table}

\FloatBarrier

\clearpage

\end{document}